\theoremstyle{plain}
  \declaretheorem[numberwithin=section]{theorem}
  \declaretheorem[numberlike=theorem]{lemma}
  \declaretheorem[numberlike=theorem]{conjecture}
\theoremstyle{definition}
  \declaretheorem[numberlike=theorem]{example}
  \declaretheorem[numberlike=theorem]{remark}
\newenvironment{acknowledgements}{\bigskip\textbf{Acknowledgements.}}{}
\newcommand{\assign}{:=}
\newcommand{\md}{\mathrm{d}}
\newcommand{\nequiv}{\mathrel{\not\equiv}}
\renewcommand{\Re}{\operatorname{Re}\,}
\begin{document}

\title{Gessel--Lucas congruences for sporadic sequences}

\author{Armin Straub\thanks{\textit{Email:} \texttt{straub@southalabama.edu}}\\
Department of Mathematics and Statistics\\
University of South Alabama}

\date{January 28, 2023}

\maketitle

\begin{abstract}
  For each of the $15$ known sporadic Ap\'ery-like sequences, we prove
  congruences modulo $p^2$ that are natural extensions of the Lucas
  congruences modulo $p$. This extends a result of Gessel for the numbers used
  by Ap\'ery in his proof of the irrationality of $\zeta (3)$. Moreover, we
  show that each of these sequences satisfies two-term supercongruences modulo
  $p^{2 r}$. Using special constant term representations recently discovered
  by Gorodetsky, we prove these supercongruences in the two cases that
  remained previously open.
\end{abstract}

\section{Introduction}

Sequences $A (n)$ that are integer solutions of either the three-term
recurrence
\begin{equation}
  (n + 1)^2 A (n + 1) = (a n^2 + a n + b) A (n) - c n^2 A (n - 1),
  \label{eq:apery2:rec:abc}
\end{equation}
or the three-term recurrence
\begin{equation}
  (n + 1)^3 A (n + 1) = (2 n + 1) (a n^2 + a n + b) A (n) - n (c n^2 + d) A (n
  - 1), \label{eq:apery3:rec:abcd}
\end{equation}
subject to the initial conditions $A (- 1) = 0$, $A (0) = 1$, are known as
\emph{Ap\'ery-like sequences}. For instance, if $(a, b, c, d) = (17, 5, 1,
0)$, then the solution to \eqref{eq:apery3:rec:abcd} is the integer sequence
\begin{equation}
  A (n) = \sum_{k = 0}^n \binom{n}{k}^2 \binom{n + k}{k}^2, \label{eq:apery3}
\end{equation}
which formed the basis of Ap\'ery's proof \cite{apery}, \cite{alf} of
the irrationality of $\zeta (3)$. Systematic searches for Ap\'ery-like
sequences have been conducted by Zagier \cite{zagier4} in the case of
\eqref{eq:apery2:rec:abc}, and by Almkvist--Zudilin \cite{az-de06} and
Cooper \cite{cooper-sporadic} in the case of \eqref{eq:apery3:rec:abcd}.
After normalizing, and apart from degenerate cases as well four hypergeometric
and four Legendrian solutions in each case, only a small number of
\emph{sporadic} solutions have been found. Namely, Zagier \cite{zagier4}
found six sporadic solutions to \eqref{eq:apery2:rec:abc}, labeled
$\boldsymbol{A}, \boldsymbol{B}, \boldsymbol{C}, \boldsymbol{D}, \boldsymbol{E},
\boldsymbol{F}$. Almkvist--Zudilin \cite{az-de06} found six corresponding
sporadic solutions to \eqref{eq:apery2:rec:abc}, labeled $(\alpha), (\gamma),
(\delta), (\epsilon), (\eta), (\zeta)$, and Cooper \cite{cooper-sporadic}
found three additional sporadic solutions to \eqref{eq:apery2:rec:abc} with $d
\neq 0$, labeled $s_7, s_{10}, s_{18}$. Explicit formulas for the sequences
$(\gamma)$, $\boldsymbol{F}$, $(\delta)$, $\boldsymbol{D}$ can be found in
\eqref{eq:apery3}, \eqref{eq:AF}, \eqref{eq:Ad}, \eqref{eq:AD}, respectively.
Tables of all $6 + 6 + 3 = 15$ sequences, including known representations as
binomial sums such as \eqref{eq:apery3} can be found, for instance, in
\cite{ms-lucascongruences}, \cite{oss-sporadic} or \cite{gorodetsky-ct}.

One reason that Ap\'ery-like sequences have received attention in the
literature is that they share (or, are believed to share) various remarkable
arithmetic properties. For instance, they are connected to modular forms in
different ways \cite{sb-picardfuchs}, \cite{beukers-apery87},
\cite{ahlgren-ono-apery}, \cite{os-zagierapery} and they satisfy unusually
strong congruences \cite{beukers-apery85}, \cite{coster-sc},
\cite{ccs-apery}, \cite{os-supercong11}, \cite{os-domb},
\cite{s-apery}, \cite{oss-sporadic}, \cite{at-az},
\cite{gorodetsky-cong-q}, \cite{gorodetsky-ct} that were coined
\emph{supercongruences} by Beukers. As we review in
Section~\ref{sec:supercongruences}, these have been proven by various authors
for $13$ of the $15$ sequences. We prove the supercongruences for one of the
previously conjectural cases, namely the sequence labeled $\boldsymbol{F}$. For
the other missing sequence, labeled $(\delta)$ and known as the
Almkvist--Zudilin numbers, we prove a weaker version of the congruences which,
however, is sufficient for our present purposes. Combined with the previously
known cases, this results in the following uniform result.

\begin{theorem}
  \label{thm:supercongruences:p2}Let $A (n)$ be one of the $6 + 6 + 3$ known
  sporadic Ap\'ery-like sequences. Then, for all primes $p \geq 3$ and
  all positive integers $n, r$,
  \begin{equation*}
    A (p^r n) \equiv A (p^{r - 1} n) \pmod{p^{2 r}} .
  \end{equation*}
\end{theorem}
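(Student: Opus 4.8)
The plan is to read the statement as a uniform packaging of a case analysis over the $15$ sequences rather than as something provable by a single argument. First I would dispose of the $13$ sequences for which supercongruences at least as strong as the desired $A(p^r n) \equiv A(p^{r-1} n) \pmod{p^{2r}}$ are already established in the literature cited in the introduction; these require no new work. This leaves only the two sequences $\boldsymbol{F}$ and $(\delta)$, so the entire difficulty of the theorem is concentrated in them.

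For both remaining cases I would start from Gorodetsky's constant term representation $A(n) = \mathrm{CT}[P^n]$ and write $A(p^r n) = \mathrm{CT}[P^{p^r n}]$. The engine is the Frobenius relation $P(x)^p \equiv P(x^p) \pmod{p}$, which lifts to an integral identity $P(x)^p = P(x^p) + p\,E(x)$ with $E \in \mathbb{Z}[x^{\pm 1}]$. Substituting $P^{p^r n} = (P(x^p) + pE)^{p^{r-1} n}$ and expanding binomially yields
\[
  A(p^r n) - A(p^{r-1} n) = \sum_{j \geq 1} \binom{p^{r-1} n}{j}\, p^j\, \mathrm{CT}\bigl[P(x^p)^{p^{r-1} n - j} E^j\bigr],
\]
where the $j = 0$ term contributes $\mathrm{CT}[P^{p^{r-1} n}] = A(p^{r-1} n)$ because the constant term is unchanged under $x \mapsto x^p$. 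A crude estimate through Kummer's theorem gives $v_p\bigl(\binom{p^{r-1}n}{j} p^j\bigr) \geq (r-1) + (j - v_p(j)) \geq r$ for every $j \geq 1$, which already recovers the Lucas-type congruence modulo $p^r$; the task is to extract a further factor $p^r$.

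This forces an improvement of the crude bound for every term whose index $j$ is small relative to $r$. Each such improvement is a mod-$p^s$ divisibility statement for a mixed constant term $\mathrm{CT}[P(x^p)^{k} E^j]$, and the point is that these mixed constant terms again obey Dwork-type two-term congruences in the spirit of Mellit--Vlasenko. I would therefore run a simultaneous induction on $r$, reducing the entire family to a base case at $r = 1$ together with a small number of mod-$p$ identities for $E$. The main obstacle is the verification of those base identities, which is precisely where the explicit shape of $P$ is decisive: for $\boldsymbol{F}$ the required identities hold cleanly and in fact deliver the full conjectured supercongruence, whereas for the Almkvist--Zudilin numbers $(\delta)$ the correction term $E$ resists the sharper analysis, so that this method yields only the modulus $p^{2r}$ --- which is exactly what Theorem~\ref{thm:supercongruences:p2} requires.
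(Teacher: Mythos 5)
Your first step---splitting the theorem into the $13$ cases already in the literature plus the two open cases $\boldsymbol{F}$ and $(\delta)$---is exactly how the paper proceeds, and that part is fine. The gap is in your treatment of the two new cases. Your Frobenius expansion $P^{p^r n} = (P(\boldsymbol{x}^p) + pE)^{p^{r-1}n}$ and the crude Kummer bound correctly recover the difference modulo $p^r$, but that modulus is precisely what Dwork-type machinery (Samol--van Straten, Mellit--Vlasenko) delivers in general; the entire content of a \emph{super}congruence is the second factor of $p^r$, and at exactly that point your argument becomes an assertion rather than a proof. You claim the mixed constant terms $\operatorname{ct}[P(\boldsymbol{x}^p)^k E^j]$ ``again obey Dwork-type two-term congruences'' and that an induction on $r$ reduces everything to ``a small number of mod-$p$ identities for $E$,'' but no such congruences are formulated or proved, no base identities are exhibited, and no general theorem supplies them---if one did, Conjecture~\ref{conj:supercongruence} would not have been open for these sequences in the first place. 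The closing claim that the identities ``hold cleanly'' for $\boldsymbol{F}$ but that $E$ ``resists'' for $(\delta)$ is narrative, not mathematics: nothing in your sketch distinguishes the two Laurent polynomials or explains where a computation would succeed or fail.

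For contrast, the paper never expands in powers of $E$ at all. It uses Gorodetsky's constant term representations only to extract explicit multiple binomial sums $A(n) = \sum_{\boldsymbol{k} \in U(n)} B(\boldsymbol{k})$ over sets $U(n)$ cut out by linear constraints, and then proves the \emph{term-by-term} congruence $B(\boldsymbol{k}) \equiv B(\boldsymbol{k}/p) \pmod{p^{2r}}$ (with the right side read as $0$ when $p \nmid \boldsymbol{k}$), from which the theorem follows by summing. The extra $p$-powers come from two concrete sources that have no counterpart in your scheme: Jacobsthal's congruence (Lemma~\ref{lem:jacobsthal}), which controls $\binom{p^r n}{\boldsymbol{a}} / \binom{p^{r-1}n}{\boldsymbol{a}/p}$ modulo $p^{s+2t-\varepsilon}$, and the elementary divisibility \eqref{eq:binomial:0:rs}, combined with the observation that the linear constraints defining $U(n)$ (e.g.\ $a_i + b_i + c_i + d_i + 2e_i = 2p^r n$) force \emph{two} coordinates in the same slot $i$ to have small $p$-adic valuation, yielding two binomial factors each divisible by $p^{r-t}$. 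This constraint-driven pigeonhole is the sequence-specific input your proposal is missing, and it is also where the paper handles $p = 3$ (the loss $\varepsilon = 1$ in Jacobsthal is absorbed by the spare factor $p^s$ with $s \geq 1$), a case your sketch does not address. As it stands, your proposal is a plausible research program, but the step from $p^r$ to $p^{2r}$ is not proved.
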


As indicated above, Theorem~\ref{thm:supercongruences:p2} was previously known
except in the two cases $\boldsymbol{F}$ and $(\delta)$. We prove these two new
cases as Theorems~\ref{thm:supercongruences:p2:F} and
\ref{thm:supercongruences:p2:d} in Section~\ref{sec:supercongruences}. We note
that the restriction to primes $p \geq 5$, or $p \geq 3$ in the case
of Theorem~\ref{thm:supercongruences:p2}, is natural for supercongruences due
to Lemma~\ref{lem:jacobsthal}. Numerical evidence suggests that
Theorem~\ref{thm:supercongruences:p2} also holds for $p = 2$ except for the
three sequences labeled $\boldsymbol{B}$, $(\delta)$ and $(\eta)$. We will not
attempt to discuss the case $p = 2$ in further detail here. We note that, in
the literature, the case $p = 3$ of Theorem~\ref{thm:supercongruences:p2} in
the previously known cases is not always discussed in detail. However, the
arguments, which for primes $p \geq 5$ yield slightly stronger
congruences due to Lemma~\ref{lem:jacobsthal}, still apply for $p = 3$ and the
resulting congruences are sufficient for
Theorem~\ref{thm:supercongruences:p2}.

As another instance of a special arithmetic property, Gessel showed
\cite[Theorem 1]{gessel-super} that the Ap\'ery numbers \eqref{eq:apery3}
satisfy the congruences
\begin{equation}
  A (n) \equiv A (n_0) A (n_1) \cdots A (n_r) \pmod{p},
  \label{eq:lucas:i}
\end{equation}
where $n = n_0 + n_1 p + \cdots + n_r p^r$ is the $p$-adic expansion of $n$.
The congruences \eqref{eq:lucas:i} are known as \emph{Lucas congruences}
because they are of the same kind as the congruences that Lucas
\cite{lucas78} showed for the binomial coefficients. Various sequences and
families of sequences have since been shown \cite{mcintosh-lucas},
\cite{granville-bin97}, \cite{sd-laurent09}, \cite{ry-diag13},
\cite{ms-lucascongruences}, \cite{delaygue-apery}, \cite{abd-lucas},
\cite{gorodetsky-ct} to satisfy the Lucas congruences. For a historical
survey, we further refer to \cite{mestrovic-lucas}.

In particular, it turns out that, as shown by Malik and the author
\cite{ms-lucascongruences} (see also \cite{gorodetsky-ct}), each sporadic
Ap\'ery-like sequence $A (n)$ satisfies the Lucas congruences.

\begin{theorem}[\cite{ms-lucascongruences}]
  \label{thm:lucas}Let $A (n)$ be one of the $6 + 6 + 3$ known sporadic
  Ap\'ery-like sequences. Then, for all primes $p$ and all integers $n, k$
  with $0 \leq k < n$,
  \begin{equation}
    A (p n + k) \equiv A (k) A (n) \pmod{p} . \label{eq:lucas:p:i}
  \end{equation}
\end{theorem}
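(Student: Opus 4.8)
The plan is to prove the Lucas congruences \eqref{eq:lucas:p:i} uniformly across all $15$ sporadic sequences by exploiting the fact that each of them admits a representation as a multiple binomial sum, that is, as a multidimensional diagonal or as a constant term of a Laurent polynomial. Concretely, each sporadic sequence can be written in the form $A(n) = \sum_{\mathbf{k}} \prod_j \binom{f_j(n,\mathbf{k})}{g_j(n,\mathbf{k})}$, where the arguments are integer-linear in $n$ and the summation variables $\mathbf{k}$. The strategy is to reduce the Lucas congruence for $A$ to the elementary Lucas congruence for a single binomial coefficient, $\binom{pm+a}{pn+b} \equiv \binom{m}{n}\binom{a}{b} \pmod{p}$ for $0 \le a,b < p$, applied termwise inside the sum.

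First I would fix the representation of the given sequence as a binomial sum and split the outer summation index modulo $p$. Writing each summation variable as $k_i = p K_i + \kappa_i$ with $0 \le \kappa_i < p$, and writing $pn+k$ in the argument of $A$ so that the top-level index also splits as $p\cdot(\text{something}) + (\text{residue})$, the goal is to show that modulo $p$ only the ``diagonal'' contributions survive, in which the low-order residues $\kappa_i$ are governed by $k$ and the high-order parts $K_i$ are governed by $n$. The key tool is the componentwise binomial Lucas congruence: each factor $\binom{f_j}{g_j}$ where $f_j = p F_j + \phi_j$ and $g_j = p G_j + \gamma_j$ factors modulo $p$ as $\binom{F_j}{G_j}\binom{\phi_j}{\gamma_j}$. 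Multiplying these over $j$ and summing, the total sum factors as a product of a sum over the residues $\boldsymbol{\kappa}$ (which reassembles $A(k)$) and a sum over the high parts $\mathbf{K}$ (which reassembles $A(n)$), provided the carries behave correctly.

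The main obstacle, and the step requiring genuine care, is the control of carries: the simple factorization $\binom{pF+\phi}{pG+\gamma} \equiv \binom{F}{G}\binom{\phi}{\gamma}$ requires $0 \le \phi, \gamma < p$, but the arguments $f_j, g_j$ are sums of several terms, so when the residues $\boldsymbol{\kappa}$ and the residue of the top index are combined, their $p$-adic residues may exceed $p$ and produce a carry into the high-order part. I would handle this exactly as in \cite{ms-lucascongruences}: observe that a binomial coefficient $\binom{n}{k}$ vanishes modulo $p$ unless no carries occur when adding $k$ to $n-k$ in base $p$ (a consequence of Kummer's theorem), so the terms with carries contribute $0 \pmod p$ and can be discarded. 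This reduces the sum to the carry-free terms, on which the clean factorization holds. The uniformity across all $15$ cases then follows because each admits such a positive binomial-sum (or constant-term) representation; for the two families $(\alpha)$–$(\zeta)$ and $s_7, s_{10}, s_{18}$ where the signs or the constant-term formulation differ, I would invoke the same carry analysis applied to the Laurent-polynomial constant-term representation, using that the constant-term operator commutes with reduction modulo $p$ via the Frobenius $x_i \mapsto x_i^p$ on the torus.

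Since this is precisely the theorem proved in \cite{ms-lucascongruences}, the remaining work is purely expository: for each of the $15$ sequences one records its binomial-sum representation and verifies that the arguments are integer-linear, so that the carry-free factorization argument applies verbatim. I expect no conceptual difficulty beyond the bookkeeping of carries described above, which is the crux of the whole argument.
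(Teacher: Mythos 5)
You should first note that the paper does not prove Theorem~\ref{thm:lucas} at all: it is imported by citation from \cite{ms-lucascongruences}, so your proposal must be measured against that external proof. Your broad outline does match its engine — write each sequence as a sum of products of binomial coefficients with integer-linear arguments, split all indices in base $p$, factor termwise via $\binom{pF+\phi}{pG+\gamma}\equiv\binom{F}{G}\binom{\phi}{\gamma}\pmod{p}$, and discard carry terms using the Kummer-type vanishing of binomial coefficients modulo $p$. Your constant-term remark is also on target in substance, but be precise about what it rests on: by \cite{sd-laurent09} (see also \cite{mv-laurent13}), the case $r=1$ of the Dwork congruences \eqref{eq:dwork} yields the Lucas congruences for $A(n)=\operatorname{ct}[P(\boldsymbol{x})^n]$ \emph{provided} the Newton polytope of $P$ has the origin as its only interior lattice point. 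That is a genuine geometric hypothesis to verify for each representation, not a formal statement that the constant-term operator commutes with $x_i\mapsto x_i^p$; without it the congruence can fail. Minor items you gloss over: the exponential prefactors $8^{n-k}$ in \eqref{eq:AF} and $3^{n-3k}$ in \eqref{eq:Ad} need Fermat's little theorem, and the sign bookkeeping $(-1)^{pK+\kappa}=(-1)^{K+\kappa}$ uses $p$ odd, while the theorem claims all primes including $p=2$, which requires separate care.

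The genuine gap is your closing claim that the remaining work is ``purely expository'' and that the carry-free factorization ``applies verbatim'' to all $15$ sequences. It does not. For the sequence $(\eta)$ no suitable constant-term or cleanly positive binomial-sum representation is known: the concluding section of this very paper records that Gorodetsky's representations \cite{gorodetsky-ct} combined with \cite{sd-laurent09} establish Theorem~\ref{thm:lucas} for only $14$ of the $15$ sequences, and that for $(\eta)$ the only available argument is the ``lengthy and technical'' proof of \cite{ms-lucascongruences} — lengthy precisely because the routine termwise scheme breaks down there: the carry terms do not simply vanish, and the surviving carry-free sum does not immediately split as $A(k)A(n)$, so additional cancellations must be established by hand. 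Finding a representation for $(\eta)$ to which your uniform argument would apply is posed in the paper as an open problem. So as written, your proposal proves at most $14$ of the $15$ cases; the crux you dismiss as bookkeeping is exactly where the real difficulty of the theorem is concentrated.
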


By iterating, one sees that the congruences \eqref{eq:lucas:p:i} are
equivalent to the the Lucas congruences \eqref{eq:lucas:i}. Gessel
\cite{gessel-super} further proved an extension of the Lucas congruences for
the Ap\'ery numbers \eqref{eq:apery3} modulo $p^2$. The second main result
of this paper is to prove corresponding congruences for all known sporadic
Ap\'ery-like sequences. This results in the following extension of
Theorem~\ref{thm:lucas} modulo $p^2$.

\begin{theorem}
  \label{thm:lucas:p2:i}Let $A (n)$ be one of the $6 + 6 + 3$ known sporadic
  Ap\'ery-like sequences. Then, for all primes $p \geq 3$ and all
  integers $n, k$ with $0 \leq k < n$,
  \begin{equation}
    A (p n + k) \equiv A (k) A (n) + p n A' (k) A (n) \pmod{p^2} .
    \label{eq:lucas:p2:i}
  \end{equation}
\end{theorem}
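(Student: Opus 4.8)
The plan is to prove Theorem~\ref{thm:lucas:p2:i} uniformly across all fifteen sporadic sequences by exploiting the binomial-sum representations that already underlie the proof of the modulo-$p$ Lucas congruences in Theorem~\ref{thm:lucas}. Each sporadic sequence admits a representation of the form $A(n) = \sum_k c(n,k)$, where $c(n,k)$ is a product of binomial coefficients (as in \eqref{eq:apery3}). First I would record, for each such product, a refined Lucas-type congruence modulo $p^2$ for the individual binomial factors. The key elementary input is the classical result that for $0 \le j < p$ one has $\binom{pa+i}{pb+j} \equiv \binom{a}{b}\binom{i}{j}\bigl(1 + p(\ldots)\bigr) \pmod{p^2}$, with the linear-in-$p$ correction term expressed through harmonic-type sums; this is the modulo-$p^2$ sharpening of Lucas' theorem (the Jacobsthal/Andrew--style congruences alluded to via Lemma~\ref{lem:jacobsthal}). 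Carrying the correction terms through the product gives $c(pn+k, pm+j) \equiv c(k,j)\,c(n,m)\bigl(1 + p n\, L(k,j)\bigr) \pmod{p^2}$ for a suitable logarithmic-derivative factor $L$.

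The second step is to sum over the summation index. Writing the summation variable as $K = pm + j$ with $0 \le j < p$, I would split $A(pn+k) = \sum_{m,j} c(pn+k, pm+j)$. The leading term $\sum_{m,j} c(k,j) c(n,m) = A(k) A(n)$ recovers Theorem~\ref{thm:lucas}. The job is then to show that the $p$-linear correction collapses to exactly $p n\, A'(k) A(n)$ modulo $p^2$. Here $A'(k)$ should be interpreted as the derivative of the natural polynomial/hypergeometric closed form, or equivalently as $\sum_j c(k,j)\, L(k,j)$ for the correct choice of $L$; a clean way to organize this is to introduce the one-parameter deformation $A(k;x) = \sum_j c(k,j)\, x^{j}$-type generating object and read off $A'(k)$ as a coefficient, so that the correction term factors as $p n \bigl(\sum_j c(k,j) L(k,j)\bigr)\bigl(\sum_m c(n,m)\bigr)$. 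The factorization of the correction into a $k$-part times $A(n)$ is exactly what the product structure of $c(n,k)$ delivers, because the $m$-dependence of the correction term drops out modulo $p$ once it is multiplied by the overall factor of $p$.

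I expect the main obstacle to be verifying that the correction term has precisely the shape $p n\, A'(k) A(n)$ uniformly, rather than some sequence-dependent expression. Two subtleties arise. First, the sequences satisfying recurrence \eqref{eq:apery3:rec:abcd} carry cubed binomial products and thus accumulate three correction contributions, and one must check that the harmonic sums combine into a single derivative $A'(k)$ rather than a more complicated second-order object; the modulo-$p^2$ truncation is what keeps cross terms $p^2(\ldots)$ negligible, so the calculation stays at first order. Second, the identification of $A'(k)$ with the coefficient extracted from the correction term requires pinning down the correct normalization of $A'$—that is, defining $A'$ consistently from the chosen binomial representation and confirming it is representation-independent. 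I would handle this by fixing, once and for all, the representation tabulated in \cite{ms-lucascongruences} for each of the fifteen sequences, defining $A'$ by the resulting explicit finite sum, and then checking the correction-term identity by a direct manipulation of the binomial congruences; the restriction $p \ge 3$ enters precisely to control the $2$-divisibility of the harmonic sums appearing in these corrections.
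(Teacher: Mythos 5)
There is a genuine gap, and it sits at the heart of your first step. The refined Lucas congruence you invoke does not have the shape you assert: in the Anton/Granville-type sharpening of Lucas' theorem modulo $p^2$, the first-order correction to $\binom{pn+i}{pm+j}\equiv\binom{n}{m}\binom{i}{j}$ is linear in \emph{both} high digits, i.e.\ of the form $p\,\bigl(n\,\alpha(i,j)+m\,\beta(i,j)\bigr)$ with harmonic-sum coefficients, and for representations such as \eqref{eq:AF} and \eqref{eq:Ad} the factors $8^{n-k}$ and $3^{n-3k}$ contribute further corrections $p\,(n-m)\,q_p(8)$, etc., involving Fermat quotients. Your claim that ``the $m$-dependence of the correction term drops out modulo $p$ once it is multiplied by the overall factor of $p$'' is not correct: the factor $p$ only reduces the needed precision of the correction to modulo $p$, and the terms linear in $m$ survive at that precision. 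Summing over the digit decomposition then produces contributions of the form $p\sum_m m\,c(n,m)\,\beta(k,j)$, and $\sum_m m\,c(n,m)$ is not in general a multiple of $A(n)$ modulo $p$; arranging the cancellation of all such terms is precisely the hard content, and your sketch supplies no mechanism for it. A decisive sanity check is the case $k=0$: there \eqref{eq:lucas:p2:i} reduces to the supercongruence $A(pn)\equiv A(n)\pmod{p^2}$, which for the sequences $\boldsymbol{F}$ and $(\delta)$ was open until this paper and is proved (Theorems~\ref{thm:supercongruences:p2:F} and \ref{thm:supercongruences:p2:d}) only by switching from the classical sums \eqref{eq:AF}, \eqref{eq:Ad} to Gorodetsky's special multinomial/constant-term representations, exactly because the classical representations do not admit the clean summand-wise congruence structure your plan presupposes. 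A uniform summand-wise argument across all fifteen sequences is also implausible for $(\eta)$, for which even the modulo-$p$ Lucas congruence currently has only a lengthy case-specific proof.

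The paper's proof avoids all summand analysis and is worth contrasting with your plan. It takes Theorem~\ref{thm:lucas} and the $r=1$ case of Theorem~\ref{thm:supercongruences:p2} as black-box inputs, writes $A(pn+k)\equiv A(k)A(n)+p\,a(n,k)\pmod{p^2}$, and substitutes $pn+k$ into the three-term recurrence \eqref{eq:apery:rec:general}; Taylor-expanding the polynomial coefficients, $c_j(pn+k)\equiv c_j(k)+pn\,c_j'(k)\pmod{p^2}$, yields the recursion \eqref{eq:apery:lucas:p2:rec:p} which, since $c_0(k)=k^\lambda\nequiv 0\pmod p$ for $1\le k\le p-1$ and $a(n,0)=0$ (this is where the supercongruence enters), determines $a(n,k)$ modulo $p$ uniquely; comparing with the defining recurrence \eqref{eq:apery:rec:general:D} of the formal derivative shows $n\,A'(k)A(n)$ satisfies the same recursion, whence $a(n,k)\equiv n\,A'(k)A(n)\pmod p$. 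This also makes your normalization worry moot, since $A'$ is defined representation-independently by \eqref{eq:apery:rec:general:D}, and it isolates the only delicate boundary issue, the term $c_2(pn+1)A(pn-1)$, which for Cooper's three $d\neq 0$ sequences requires Lemma~\ref{lem:A:pm1}. If you wish to salvage a direct summand-wise proof, you would first need to prove a two-variable congruence with the full $(n,m)$-linear correction and exhibit the cancellation of the $m$-part for each representation; but note that at $k=0$ this is already as hard as the supercongruences themselves.
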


We will refer to the congruences \eqref{eq:lucas:p2:i} as \emph{Gessel--Lucas
congruences} modulo $p^2$. For a precise definition of the formal derivative
$A' (n)$, we refer to Section~\ref{sec:A:prime}. However, for certain of the
sequences, $A' (n)$ can be obtained as the actual derivative
$\frac{\md}{\md n} A (n)$ of a natural interpolation of $A (n)$. We
prove Theorem~\ref{thm:lucas:p2:i} in Section~\ref{sec:proof} by extending the
proof of Gessel \cite{gessel-super}, who proved the congruences
\eqref{eq:lucas:p2:i} for the Ap\'ery numbers \eqref{eq:apery3}. To our
knowledge, the Gessel--Lucas congruences \eqref{eq:lucas:p2:i} had not been
previously observed for the other sequences covered by
Theorem~\ref{thm:lucas:p2:i}. A crucial ingredient for the proof is
Theorem~\ref{thm:supercongruences:p2}, the supercongruences satisfied by each
sporadic sequence.

The reductions modulo prime powers $p^r$ of certain sequences $A (n)$, such as
diagonals of rational functions and constant terms, can be described using the
notion of \emph{linear $p$-schemes} that was recently introduced by Rowland
and Zeilberger \cite{rz-cong}. Using the language found in
\cite{beukers-p-schemes}, a linear $p$-scheme modulo $p^r$ for $A (n)$ with
$s$ states consists of a vector of sequences $\boldsymbol{A} (n) = (A_1 (n),
\ldots, A_s (n))$, with $A_1 (n) = A (n)$, as well as matrices $M (k)$ for $k
\in \{ 0, 1, \ldots, p - 1 \}$ such that
\begin{equation*}
  \boldsymbol{A} (p n + k) \equiv M (k) \boldsymbol{A} (n) \pmod{p^r}
\end{equation*}
for all integers $n, k$ with $0 \leq k < n$. Based on earlier work of
Rowland and Yassawi \cite{ry-diag13}, Rowland and Zeilberger describe in
\cite{rz-cong} algorithms to compute linear $p$-schemes for the values
modulo $p^r$ of a sequence $A (n) = \operatorname{ct} [P (\boldsymbol{x})^n Q
(\boldsymbol{x})]$ of constant terms, where $P, Q \in \mathbb{Z}
[\boldsymbol{x}^{\pm 1}]$ are Laurent polynomials in $\boldsymbol{x}= (x_1,
\ldots, x_d)$. As done in \cite{ry-diag13} and \cite{s-schemes}, one can
derive upper bounds for the number of states for the $p$-schemes produced by
these algorithms. However, as pointed out by Beukers
\cite{beukers-p-schemes}, these bounds appear inefficient except in small
cases.

On the other hand, it appears to be an interesting and fruitful question to
ask for general results to describe sequences whose reductions modulo $p^r$
can be expressed using linear $p$-schemes with few states. For instance, as
observed in \cite{hs-lucas-x}, a sequence $A (n)$ satisfies the Lucas
congruences \eqref{eq:lucas:i} modulo $p$ if and only if its modulo $p$
reductions can be encoded by a linear $p$-scheme with a single state. One way
to interpret Theorem~\ref{thm:lucas:p2:i} is that it provides explicit
two-state linear $p$-schemes for all sporadic Ap\'ery-like sequences modulo
$p^2$. It would be of considerable interest to better understand which other
sequences share this property generalizing the classical Lucas congruences.

\section{Notation and basic congruences}

Suppose that $\boldsymbol{k}= (k_1, \ldots, k_{\ell})$ is a tuple of nonnegative
integers adding up to $n$, that is, $| \boldsymbol{k} | = k_1 + k_2 + \cdots +
k_{\ell} = n$. Then we denote the corresponding multinomial coefficient as
\begin{equation*}
  \binom{n}{\boldsymbol{k}} = \binom{n}{k_1, \ldots, k_{\ell}} = \frac{n!}{k_1
   !k_2 ! \cdots k_{\ell} !} .
\end{equation*}
Throughout the paper, we use typical notation and write, for instance,
$\lambda \boldsymbol{k}$ as short for $(\lambda k_1, \lambda k_2, \ldots,
\lambda k_{\ell})$.

The following version of Jacobsthal's binomial congruence is proved in
\cite{gessel-euler} and \cite{granville-bin97} (and is extended in
\cite{s-apery} to binomial coefficients which are allowed to have negative
entries).

\begin{lemma}
  \label{lem:jacobsthal}For primes $p \geq 5$, and integers $n, k$ and
  $r, s \geq 1$,
  \begin{equation}
    \binom{p^r n}{p^s k} / \binom{p^{r - 1} n}{p^{s - 1} k} \equiv 1 \pmod{p^{r + s + \min (r, s)}} . \label{eq:jacobsthal}
  \end{equation}
  For primes $p = 3$, respectively $p = 2$, the congruence
  \eqref{eq:jacobsthal} holds modulo $p^{r + s + \min (r, s) - 1}$,
  respectively $p^{r + s + \min (r, s) - 2}$.
\end{lemma}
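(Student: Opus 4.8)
The plan is to reduce the statement to a single instance of the Kazandzidis congruence, a sharp form of Jacobsthal's binomial congruence, and then to read off the modulus from the $p$-adic valuations of the relevant factors. Writing $a = p^{r-1} n$ and $b = p^{s-1} k$, the quotient in question is exactly $\binom{pa}{pb} / \binom{a}{b}$, so everything comes down to proving that, for $p \geq 5$,
\[
  \binom{pa}{pb} \equiv \binom{a}{b} \pmod{p^{\,3 + v_p(a) + v_p(b) + v_p(a-b)}},
\]
where $v_p$ denotes the $p$-adic valuation. Granting this, I would substitute $a = p^{r-1}n$ and $b = p^{s-1}k$ and, assuming without loss of generality that $r \geq s$, observe that $v_p(a) \geq r - 1$, that $v_p(b) \geq s - 1$, and that, since $a - b = p^{s-1}(p^{r-s} n - k)$, also $v_p(a-b) \geq s - 1$. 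Adding these to the base exponent $3$ gives the exponent $r + 2s = r + s + \min(r,s)$, which is the claimed modulus; the case $s \geq r$ is entirely analogous, with $v_p(a-b) \geq r-1$ instead.

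For the core congruence I would start from the factorization $\binom{pa}{pb} / \binom{a}{b} = F(a) / \bigl( F(b)\, F(a-b) \bigr)$, where $F(a) = \prod_{1 \leq j \leq pa,\, p \nmid j} j$ collects the factors of $(pa)!$ coprime to $p$ after extracting $p^a\, a!$. Taking a $p$-adic logarithm and grouping the factors in blocks of length $p$, the problem reduces to estimating sums of $\log(1 + pq/t)$ over $t \in \{1, \dots, p-1\}$, which expand in terms of the power sums $\sum_t t^{-m}$. The arithmetic input here is Wolstenholme's congruences $\sum_{t=1}^{p-1} t^{-1} \equiv 0 \pmod{p^2}$ and $\sum_{t=1}^{p-1} t^{-2} \equiv 0 \pmod{p}$ for $p \geq 5$, together with the vanishing modulo $p$ of the remaining power sums.

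The main obstacle is that a naive term-by-term valuation bound is too weak: it recovers only the exponent $3 + v_p(b) + v_p(a-b)$ and misses the contribution $v_p(a)$, which in our application supplies precisely the summand $r - 1$ and is therefore indispensable. For instance, with $p = 5$, $a = 5$, $b = 1$ the quotient equals $10626 = 1 + 5^4 \cdot 17$, whose valuation is $4 = 3 + v_5(a)$ rather than $3$. Recovering this extra power requires exploiting the symmetry $b \leftrightarrow a - b$ of $\binom{pa}{pb}$ and pairing the factor indexed by $t$ with the one indexed by $p - t$, so that the leading $pq/t$ terms cancel and the surviving contributions carry an additional power of $p$; it is this genuine cancellation, rather than crude size estimates, that must be accounted for carefully.

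Finally, for $p = 3$ and $p = 2$ the two Wolstenholme congruences degrade—for $p = 3$ the harmonic sum vanishes only modulo $p$, and for $p = 2$ it fails to vanish in the required sense—which lowers the guaranteed exponent by $1$ and by $2$ respectively. This matches exactly the stated weakening of the modulus to $p^{r + s + \min(r,s) - 1}$ and $p^{r + s + \min(r,s) - 2}$ in those two cases.
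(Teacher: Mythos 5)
The paper does not actually prove this lemma---it simply cites \cite{gessel-euler} and \cite{granville-bin97}---so there is no internal proof to compare against; your argument is essentially the classical one underlying those references, and its skeleton is sound. The reduction is correct and clean: with $a = p^{r-1}n$, $b = p^{s-1}k$, the quotient is $\binom{pa}{pb}/\binom{a}{b}$, and the ratio form of the sharp Jacobsthal--Kazandzidis congruence gives exponent $3 + (r-1) + (s-1) + (\min(r,s)-1) = r+s+\min(r,s)$, where in fact no case split on $r \geq s$ is needed since $\nu_p(a-b) \geq \min(\nu_p(a),\nu_p(b)) \geq \min(r,s)-1$. Your diagnosis of the key difficulty is also accurate and verifiable: in the factorization $\binom{pa}{pb}/\binom{a}{b} = F(a)/(F(b)F(a-b))$ the unpaired first-order contribution is $p\,s_1\,b(a-b)$ with $s_1 = \sum_{t=1}^{p-1} t^{-1}$, which only yields $3+\nu_p(b)+\nu_p(a-b)$; pairing $t$ with $p-t$ gives $(1+pq/t)(1+pq/(p-t)) = 1 + p^2 q(q+1)/(t(p-t))$, and since $\sum_{q=0}^{x-1} q(q+1) = (x^3-x)/3$ is an odd polynomial, the $(a,b,a-b)$-difference of the leading paired term is exactly $p^2 \cdot (s_1/p) \cdot ab(a-b)$, which supplies the missing factor $\nu_p(a)$ via Wolstenholme. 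What your sketch leaves open is the bookkeeping for the higher terms $j \geq 2$ of $\log\bigl(1 + p^2q(q+1)/(t(p-t))\bigr)$: one must check that each difference $\Delta_j = Q_j(a) - Q_j(b) - Q_j(a-b)$, where $Q_j(x) = \sum_{q=0}^{x-1}(q(q+1))^j$, is divisible by $ab(a-b)$ (true, because $Q_j$ is odd), and control the $p$-adic denominators of $\Delta_j/(ab(a-b))$ and of $\sum_t (t(p-t))^{-j}$; this is routine but not automatic, and it is precisely where the loss of one power at $p=3$ enters.

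The one genuine flaw is your final paragraph on $p=2$. There the pairing $t \leftrightarrow p-t$ degenerates ($t=1$ is self-paired), the contribution $\nu_p(a)$ is genuinely unavailable, and the ratio-form congruence your reduction would need is false: taking $(r,s,n,k) = (2,1,1,1)$, the claimed modulus is $2^{r+s+\min(r,s)-2} = 4$, yet $\binom{4}{2}/\binom{2}{1} = 3 \nequiv 1 \pmod{4}$. So the heuristic ``lower the base exponent by $2$'' does not survive your reduction; at $p=2$ only the difference form $\binom{p^r n}{p^s k} \equiv \binom{p^{r-1}n}{p^{s-1}k}$ holds with that modulus (here $6 - 2 = 4$). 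This counterexample in fact applies to the lemma's $p=2$ clause exactly as it is stated in quotient form, so that clause should be read as a statement about differences of binomials---which is immaterial for the paper, since it only invokes the lemma for $p \geq 3$. For $p = 3$ your claim is fine: the base exponent drops to $2$ (Wolstenholme fails by one power), consistent with, e.g., $\binom{9}{3}/\binom{3}{1} = 28 \equiv 1 \pmod{27}$, matching $2 + \nu_3(a) = 3$.
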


Also note that, if $p \nmid k$ and $s \leq r$, then we have the much
simpler congruence
\begin{equation}
  \binom{p^r n}{p^s k} = p^{r - s}  \frac{n}{k} \binom{p^r n - 1}{p^s k - 1}
  \equiv 0 \pmod{p^{r - s}} . \label{eq:binomial:0:rs}
\end{equation}
Finally, for the proof of our main result, we record the following observation
concerning Cooper's sporadic sequences $s_7, s_{10}, s_{18}$.

\begin{lemma}
  \label{lem:A:pm1}If $A (n)$ is one of the $3$ known sporadic Ap\'ery-like
  sequences that satisfy \eqref{eq:apery3:rec:abcd} with $d \neq 0$ then, for
  all primes $p$,
  \begin{equation*}
    A (p - 1) \equiv 0 \pmod{p} .
  \end{equation*}
\end{lemma}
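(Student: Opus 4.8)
The plan is to establish the congruence separately for each of the three sequences $s_7$, $s_{10}$, $s_{18}$, by evaluating an explicit closed form at $n=p-1$ and reducing modulo $p$ term by term. Each of these sequences admits a representation as a finite sum of products of binomial coefficients (times, in some cases, a power of a fixed integer), as tabulated in \cite{oss-sporadic} and \cite{gorodetsky-ct}. Writing $A(n)=\sum_k c(n,k)$ with each $c(n,k)$ such a product, it suffices to show $\sum_k c(p-1,k)\equiv 0\pmod p$. I would treat the generic primes first, and dispose of the finitely many primes dividing the relevant constants (the fixed base of any power appearing, or $b$, or the leading coefficient of the representation) afterwards by direct evaluation of $A(p-1)\bmod p$ from the recurrence \eqref{eq:apery3:rec:abcd}.

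For the generic primes, the reduction rests on two elementary facts about binomial coefficients modulo $p$. First, Lucas' theorem gives $\binom{p-1}{k}\equiv(-1)^k\pmod p$ for $0\le k\le p-1$, so the outer binomial factors contribute only signs. Second, by Kummer's theorem a central binomial coefficient $\binom{2k}{k}$ (and, likewise, trinomial factors such as $\binom{3k}{k}$) vanishes modulo $p$ as soon as the base-$p$ addition $k+k$ produces a carry, that is, for $(p-1)/2<k\le p-1$; the mirror factor attached to $n-k$ vanishes for $k<(p-1)/2$. These two observations annihilate all but a handful of central terms near $k=(p-1)/2$, and I expect the surviving terms to cancel because of the sign and coefficient structure that distinguishes the $d\neq 0$ sequences from their $d=0$ counterparts. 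As a guide and a consistency check, note that specializing \eqref{eq:apery3:rec:abcd} at $n=p-1$, where $(n+1)^3=p^3\equiv 0$, $2n+1\equiv-1$, $an^2+an+b\equiv b$, and $cn^2+d\equiv c+d\pmod p$, yields the relation $bA(p-1)\equiv(c+d)A(p-2)\pmod p$, which both anchors the induction on the exceptional primes and pins down the value the central terms must reproduce.

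The main obstacle will be the term-by-term cancellation of the surviving central contributions, which has to be carried out for each of the three representations individually; unlike the outer Lucas factors, these depend on the precise shape of each sum and on the arithmetic of the accompanying constants. A cleaner uniform alternative, which I would pursue in parallel, is to use Gorodetsky's constant-term representations $A(n)=\operatorname{ct}[P(\boldsymbol{x})^nQ(\boldsymbol{x})]$ from \cite{gorodetsky-ct}: then $A(p-1)=\operatorname{ct}[P^{p-1}Q]$, and summing over $(\mathbb{F}_p^\times)^d$ together with Fermat's little theorem ($P^{p-1}\equiv 1$ wherever $P\neq 0$) rewrites $A(p-1)\bmod p$ as a point count over the torus. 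The difficulty there is to control the extra monomials whose exponents are nonzero multiples of $p-1$; if these can be shown to contribute trivially by a degree bound, this route would prove all three cases at once.
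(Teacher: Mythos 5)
Your overall strategy is viable, but as written the proposal has a genuine gap: the decisive step is deferred in both of your routes. In route~1, everything you actually establish is routine --- $\binom{p-1}{k}\equiv(-1)^k\pmod p$ by Lucas, the Kummer vanishing of central factors past $(p-1)/2$, the finite check of exceptional primes via the recurrence (legitimate, since the leading coefficient $(n+1)^3$ is a unit mod $p$ for $n+1<p$), and the relation $bA(p-1)\equiv(c+d)A(p-2)\pmod p$ (correct, but it relates two unknown values and so anchors nothing by itself). The entire content of the lemma sits in the sentence ``I expect the surviving terms to cancel,'' which is exactly what has to be proved, separately for each sequence. To be fair, the route does work where the structure is favorable: for $s_7$, using the representation $s_7(n)=\sum_k\binom{n}{k}^2\binom{n+k}{k}\binom{2k}{n}$ from \cite{oss-sporadic}, at $n=p-1$ the factor $\binom{2k}{p-1}$ vanishes mod $p$ for every $k\neq(p-1)/2$ (write $2k=p+r$ with $r<p-1$ and apply Lucas), and for $k=(p-1)/2$ one has $\binom{p-1+k}{k}\equiv\binom{1}{0}\binom{k-1}{k}=0\pmod p$, so every term dies individually. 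But for $s_{10}$ and $s_{18}$ the known binomial representations are more involved, the central terms need not vanish one by one, and the required cancellation is precisely the open part of your argument.

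Your ``cleaner uniform alternative'' hides a second, deeper gap that you did not flag. Summing $P(\boldsymbol{x})^{p-1}Q(\boldsymbol{x})$ over $(\mathbb{F}_p^\times)^d$ gives $(-1)^d$ times the sum of the coefficients of \emph{all} monomials whose exponent vectors are divisible componentwise by $p-1$; so even if you control the stray monomials with nonzero exponents (which no mere degree bound does, since the exponents of $P^{p-1}$ fill out $(p-1)$ times the Newton polytope and hit such lattice points at its vertices), you are still left needing the point count $\#\{\boldsymbol{x}\in(\mathbb{F}_p^\times)^d:\,P(\boldsymbol{x})\neq 0\}\equiv 0\pmod p$ (suitably weighted by $Q$). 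That statement is equivalent in substance to the lemma itself and is nowhere supplied. For context: the paper does not argue directly at all --- it quotes the lemma as a special case of Theorem~6.6 of \cite{ms-lucascongruences} and the discussion preceding it --- so a completed version of your route~1 would amount to reproving that cited result; in its present form, however, the proposal establishes the lemma for none of the three sequences beyond what the standard Lucas/Kummer reductions give.
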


\begin{proof}
  This is a special case of \cite[Theorem~6.6]{ms-lucascongruences} and the
  discussion preceding it.
\end{proof}

\section{Supercongruences}\label{sec:supercongruences}

It was observed by Beukers \cite{beukers-apery85} that the Ap\'ery numbers
satisfy congruences of a certain type that are stronger than what is predicted
by formal group theory. Such \emph{supercongruences} were further studied by
Coster \cite{coster-sc} who showed that the Ap\'ery numbers satisfy
\begin{equation*}
  A (p^r n) \equiv A (p^{r - 1} n) \pmod{p^{3 r}}
\end{equation*}
for all primes $p \geq 5$. The case $r = 1$ of these congruences had
previously been observed by Gessel \cite{gessel-super}. Since then
supercongruences have been established for other Ap\'ery-like sequences
\cite{os-supercong11}, \cite{os-domb}, \cite{oss-sporadic},
\cite{gorodetsky-cong-q}, \cite{gorodetsky-ct}. In fact, numerical
evidence suggests that all Ap\'ery-like sequences satisfy supercongruences.
More precisely, the following conjecture appears in \cite{oss-sporadic}.

\begin{conjecture}
  \label{conj:supercongruence}Let $A (n)$ be one of the $6 + 6 + 3$ known
  sporadic Ap\'ery-like sequences. Then, for all primes $p \geq 5$ and
  all positive integers $n, r$,
  \begin{equation}
    A (p^r n) \equiv A (p^{r - 1} n) \pmod{p^{\lambda r}}
    \label{eq:supercongruence}
  \end{equation}
  where $\lambda = 3$ except in the five cases $\boldsymbol{B}$, $\boldsymbol{C}$,
  $\boldsymbol{E}$, $\boldsymbol{F}$ and $s_{18}$ in which case $\lambda = 2$.
\end{conjecture}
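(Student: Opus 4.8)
The plan is to prove \eqref{eq:supercongruence} one sequence at a time, exploiting the explicit representations of each sporadic sequence as a multiple binomial sum and, following Gorodetsky \cite{gorodetsky-ct}, as a constant term $A(n) = \operatorname{ct}[P(\boldsymbol{x})^n Q(\boldsymbol{x})]$. For thirteen of the fifteen sequences the congruence is already available \cite{beukers-apery85,coster-sc,os-supercong11,os-domb,oss-sporadic,gorodetsky-cong-q,gorodetsky-ct}, so the substantive work lies in the two outstanding cases $\boldsymbol{F}$ and $(\delta)$ and in presenting the argument uniformly. In every case I would split the statement into a base case $r = 1$, namely $A(pn) \equiv A(n) \pmod{p^{\lambda}}$, and an inductive lifting step that carries the congruence from exponent $r$ to $r+1$ without losing powers of $p$.

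For the base case I would partition the defining sum $A(pn) = \sum_{j} f(pn,j)$ according to the divisibility of $j$ by $p$. The terms with $j = pi$ are matched term by term against $A(n) = \sum_i f(n,i)$ via Jacobsthal's congruence (Lemma~\ref{lem:jacobsthal}), each relevant ratio $\binom{pn}{pi}/\binom{n}{i}$ being $\equiv 1$ to a high power of $p$, so they reproduce $A(n)$ modulo $p^{\lambda}$. The terms with $p \nmid j$ are forced to be divisible by $p^2$ by the elementary congruence~\eqref{eq:binomial:0:rs}. This robustly yields $\lambda = 2$ and in particular settles $\boldsymbol{F}$, whose predicted exponent is $\lambda = 2$. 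The obstruction for $\boldsymbol{F}$ and $(\delta)$ is that their classical binomial sums do not make the splitting effective; here I would pass to Gorodetsky's constant-term representation, where the same idea is executed through the Frobenius relation $P(\boldsymbol{x})^p \equiv P(\boldsymbol{x}^p) \pmod{p}$ refined modulo $p^2$.

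To pass from $r = 1$ to general $r$ I would stay in the constant-term picture, where the Frobenius structure of $A(n) = \operatorname{ct}[P^n Q]$ supplies Dwork-type congruences relating $A(p^r m)$ to $A(p^{r-1} m)$; telescoping these against the base case propagates the two-term supercongruence to all $r$. This is exactly where Gorodetsky's recently discovered representations become indispensable, since the classical binomial sums for $\boldsymbol{F}$ and $(\delta)$ do not expose the requisite Frobenius lift.

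The main obstacle is the sharp exponent $\lambda = 3$. The sum-splitting argument never yields better than $\lambda = 2$, because the terms with $p \nmid j$ are bounded individually; reaching $\lambda = 3$ demands an additional cancellation among these terms that is invisible term by term, and capturing it is the substance of Coster's \cite{coster-sc} treatment of the Ap\'ery numbers. I expect the genuinely hard case to be $(\delta)$: although it is predicted to satisfy $\lambda = 3$, even the constant-term method appears to deliver only the weaker congruence modulo $p^{2r}$, which already suffices for Theorem~\ref{thm:supercongruences:p2} but falls short of \eqref{eq:supercongruence}. Deciding whether this loss is an artifact of the available representation or an intrinsic obstruction is the crux of what remains.
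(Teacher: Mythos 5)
Your scoping is accurate and matches the paper's: the statement is a conjecture, and the paper does not prove it in full either---it proves the case $\boldsymbol{F}$ (where the conjectured exponent is $\lambda = 2$, Theorem~\ref{thm:supercongruences:p2:F}) and, for $(\delta)$, only the weakened exponent $\lambda = 2$ (Theorem~\ref{thm:supercongruences:p2:d}), leaving $\lambda = 3$ for $(\delta)$ open apart from the case $r = 1$ due to Amdeberhan--Tauraso. Your base-case mechanism for $\boldsymbol{F}$---split Gorodetsky's multiple binomial sum according to $p \mid \boldsymbol{k}$ versus $p \nmid \boldsymbol{k}$, match the divisible terms via Lemma~\ref{lem:jacobsthal}, kill the rest via \eqref{eq:binomial:0:rs}---is exactly the paper's. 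But your architecture for general $r$ has a genuine gap: you propose to prove $r = 1$ and then lift by ``telescoping Dwork-type congruences.'' This cannot work as stated. The Dwork congruences \eqref{eq:dwork} hold only modulo $p^r$, far weaker than the needed $p^{2r}$, and iterating the $r = 1$ congruence $A(pn) \equiv A(n) \pmod{p^2}$ with $n$ replaced by $p^{r-1}n$ yields only a congruence modulo $p^2$; no telescoping raises the modulus. The paper needs no induction at all: it proves, for each fixed $r$, the term-by-term congruence $B(\boldsymbol{k}) \equiv B(\boldsymbol{k}/p) \pmod{p^{2r}}$ of \eqref{eq:summand:F:p2}, where the $r$-dependence of the modulus comes directly from Lemma~\ref{lem:jacobsthal} (moduli of the form $p^{s + 2t - \varepsilon}$ with $s, t \leq r$) combined, when $t < r$, with the divisibility $p^{2r - 2t} \mid B(\boldsymbol{k})$ extracted from \eqref{eq:binomial:0:rs}; summing over $\boldsymbol{k} \in U(p^r n)$ then gives $A(p^r n) \equiv A(p^{r-1} n) \pmod{p^{2r}}$ in one stroke.

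A second, smaller gap: even for $r = 1$, divisibility by $p^2$ of the terms with $p \nmid \boldsymbol{k}$ does not follow from \eqref{eq:binomial:0:rs} applied to a single binomial coefficient, which yields only one factor of $p$. The crucial structural point---and the real reason Gorodetsky's representations are indispensable here---is that the linear constraints defining $U(n)$ (for $\boldsymbol{F}$, the relations $a_i + b_i + c_i + d_i + 2e_i = 2p^r n$, with $p$ odd so the factor $2$ is harmless) force at least \emph{two} of the multinomial factors to contain an entry not divisible by $p$, whence two factors each divisible by $p^r$ divide $B(\boldsymbol{k})$. Your sketch asserts the $p^2$-divisibility without this step. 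Relatedly, the suggested ``Frobenius relation $P(\boldsymbol{x})^p \equiv P(\boldsymbol{x}^p) \pmod{p}$ refined modulo $p^2$'' is not what the paper uses (and no such refinement holds in general): the constant-term expression serves only to produce the multiple binomial sum, and all congruence work happens at the level of that sum. Your closing assessment of $(\delta)$ is correct and consistent with the paper: the method delivers only $\lambda = 2$ there, and whether $\lambda = 3$ holds remains open.
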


The four cases $\boldsymbol{A}$, $\boldsymbol{D}$, $(\gamma)$, $s_{10}$ follow
from Coster's work \cite{coster-sc}. Osburn and Sahu proved the case
$\boldsymbol{C}$ in \cite{os-supercong11} as well as the cases $\boldsymbol{E}$
and $(\alpha)$ in \cite{os-domb}; together with the author, they further
established the cases $(\epsilon)$, $(\eta)$, $s_7$ and $s_{18}$ in
\cite{oss-sporadic}. More recently, Gorodetsky proved the case $(\zeta)$ in
\cite{gorodetsky-cong-q} as well as the case $\boldsymbol{B}$
\cite{gorodetsky-ct}.

In Theorem~\ref{thm:supercongruences:p2:F}, we prove the previously open case
$\boldsymbol{F}$ of Conjecture~\ref{conj:supercongruence}. For the final missing
case $(\delta)$, we prove in Theorem~\ref{thm:supercongruences:p2:d} a weaker
version of Conjecture~\ref{conj:supercongruence} where the conjectured
exponent $\lambda = 3$ is replaced with $\lambda = 2$. The crucial ingredient
for these proofs are suitable constant terms expressions that were recently
obtained by Gorodetsky \cite{gorodetsky-ct}. Combined with the previously
known cases, this proves Theorem~\ref{thm:supercongruences:p2}, which is the
weaker version of Conjecture~\ref{conj:supercongruence} where $\lambda = 2$
for all sequences. While the case $(\delta)$ of
Conjecture~\ref{conj:supercongruence} remains open in general, we note that
Amdeberhan and Tauraso \cite{at-az} prove the case $r = 1$ of the
corresponding conjectured supercongruences \eqref{eq:supercongruence}.

We recall that the sporadic Ap\'ery-like sequence labeled $\boldsymbol{F}$ by
Zagier \cite{zagier4} is the sequence
\begin{equation}
  A_{\boldsymbol{F}} (n) = \sum_{k = 0}^n (- 1)^k 8^{n - k} \binom{n}{k} \sum_{l
  = 0}^k \binom{k}{l}^3, \label{eq:AF}
\end{equation}
which solves the three-term recurrence \eqref{eq:apery2:rec:abc} with $(a, b,
c) = (17, 6, 72)$.

\begin{theorem}
  \label{thm:supercongruences:p2:F}For all primes $p \geq 3$ and all
  positive integers $n, r$,
  \begin{equation*}
    A_{\boldsymbol{F}} (p^r n) \equiv A_{\boldsymbol{F}} (p^{r - 1} n) \pmod{p^{2 r}} .
  \end{equation*}
\end{theorem}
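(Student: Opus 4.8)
The plan is to realize the sequence $A_{\boldsymbol{F}}$ as a sequence of constant terms and then to run the standard supercongruence machinery through that representation. The starting point is Gorodetsky's constant term expression for $A_{\boldsymbol{F}}$, which writes $A_{\boldsymbol{F}}(n) = \operatorname{ct}[P(\boldsymbol{x})^n Q(\boldsymbol{x})]$ for suitable Laurent polynomials $P, Q \in \mathbb{Z}[\boldsymbol{x}^{\pm 1}]$. Given such a representation, the congruence $A_{\boldsymbol{F}}(p^r n) \equiv A_{\boldsymbol{F}}(p^{r-1} n) \pmod{p^{2r}}$ reduces to a statement about the constant terms of $P^{p^r n} Q$ versus $P^{p^{r-1} n} Q$. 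The key algebraic input is the Frobenius-type relation $P(\boldsymbol{x})^p \equiv P(\boldsymbol{x}^p) \pmod{p}$, valid for any $P \in \mathbb{Z}[\boldsymbol{x}^{\pm 1}]$, which when lifted gives $P(\boldsymbol{x})^p = P(\boldsymbol{x}^p) + p\, R(\boldsymbol{x})$ for some Laurent polynomial $R$.

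First I would set $m = p^{r-1} n$ and write $P^{pm} = (P^p)^m = (P(\boldsymbol{x}^p) + pR)^m$. Expanding by the binomial theorem and keeping terms modulo $p^2$ leaves only the first two terms,
\begin{equation*}
  P^{pm} \equiv P(\boldsymbol{x}^p)^m + p m\, P(\boldsymbol{x}^p)^{m-1} R \pmod{p^2}.
\end{equation*}
Taking constant terms against $Q$, the first term contributes $\operatorname{ct}[P(\boldsymbol{x}^p)^m Q(\boldsymbol{x})]$, and the substitution $\boldsymbol{x} \mapsto \boldsymbol{x}^p$ together with the fact that only the ``diagonal'' monomials survive the constant-term extraction identifies this with $\operatorname{ct}[P(\boldsymbol{x})^m Q_0(\boldsymbol{x})]$ for an appropriate piece $Q_0$ of $Q$; for the simplest case $Q = 1$ this is exactly $A_{\boldsymbol{F}}(m) = A_{\boldsymbol{F}}(p^{r-1}n)$. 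The burden of the argument is then to control the second term: one must show that $m\, \operatorname{ct}[P(\boldsymbol{x}^p)^{m-1} R\, Q] \equiv 0 \pmod{p}$, so that the whole correction vanishes modulo $p^2$ and one obtains the case $r=1$.

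To promote the $r=1$ statement to general $r$, I would proceed by induction, using the already-established supercongruences together with Lemma~\ref{lem:jacobsthal} to iterate; this is where the factor $p^{2r}$ rather than $p^2$ is produced, since each application of the Frobenius step contributes an additional factor of $p$ to the valuation of the correction term and the Jacobsthal congruences guarantee that the binomial quotients arising in the telescoping carry the expected $p$-power. Concretely, writing $A_{\boldsymbol{F}}(p^r n) - A_{\boldsymbol{F}}(p^{r-1}n)$ as a constant term of $P^{p^{r-1}n}(P^{(p-1)p^{r-1}n} - 1)$ weighted by $Q$ and feeding in the relation $P^{p^{r-1}} \equiv P(\boldsymbol{x}^{p^{r-1}}) \pmod{p^r}$ lets the induction hypothesis absorb the lower-order part while the new Frobenius error contributes the missing power.

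The main obstacle I anticipate is the estimate on the correction term $m\, \operatorname{ct}[P(\boldsymbol{x}^p)^{m-1} R\, Q]$: unlike the leading term, this does not simplify to a value of $A_{\boldsymbol{F}}$ and must be bounded by hand using the explicit shape of $P$, $Q$, and $R$ coming from Gorodetsky's representation. In particular, one needs that the monomials of $R$ interact with the $p$-th-power structure of $P(\boldsymbol{x}^p)^{m-1}$ in such a way that every surviving constant-term contribution is divisible by the appropriate power of $p$; verifying this may require a short combinatorial lemma about which exponent vectors can sum to zero, or an appeal to a linear $p$-scheme for $A_{\boldsymbol{F}}$ modulo $p^2$ of the kind discussed in the introduction. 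The fact that the result is only $\lambda = 2$ rather than $\lambda = 3$ for $\boldsymbol{F}$ suggests that the correction term is genuinely nonzero modulo $p^2$ in general, so the argument must extract exactly one power of $p$ and no more, which makes the bookkeeping delicate.
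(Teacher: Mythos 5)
Your proposal has a genuine gap at exactly the point you yourself flag as the ``main obstacle'': you never prove that the correction term $m\,\operatorname{ct}[P(\boldsymbol{x}^p)^{m-1}R\,Q]$ carries the required power of $p$, and deferring this to ``a short combinatorial lemma'' or ``an appeal to a linear $p$-scheme'' defers the entire content of the theorem. For $r=1$ (where $m=n$ may be prime to $p$) you need $\operatorname{ct}[P(\boldsymbol{x}^p)^{n-1}R]\equiv 0 \pmod{p}$, which is a nontrivial property of the specific Laurent polynomial $\Lambda$ and fails for generic $P$; the generic Frobenius lift only yields Dwork-type congruences modulo $p^r$ (the Samol--van Straten mechanism discussed in the paper's conclusion), never the supercongruence strength $p^{2r}$. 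Worse, your promotion to general $r$ rests on the relation $P^{p^{r-1}}\equiv P(\boldsymbol{x}^{p^{r-1}}) \pmod{p^r}$, which is false already for $r=2$: since $P^p = P(\boldsymbol{x}^p)+pR$ with $R\neq 0$ in general, the congruence $P^p\equiv P(\boldsymbol{x}^p)$ holds only modulo $p$, and iterating the lift $(A+pB)^p = A^p + p^2(\cdots)$ buys roughly one extra power of $p$ per iteration, capping this style of argument at a modulus near $p^{r+1}$, well short of $p^{2r}$.

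The paper's proof avoids the constant-term calculus entirely. It uses Gorodetsky's constant term expression only to extract an explicit multiple binomial sum $A_{\boldsymbol{F}}(n)=\sum_{\boldsymbol{k}\in U(n)}B(\boldsymbol{k})$ over $15$-tuples subject to linear constraints, and then proves the \emph{termwise} congruence $B(\boldsymbol{k})\equiv B(\boldsymbol{k}/p) \pmod{p^{2r}}$ for $\boldsymbol{k}\in U(p^rn)$, with the right-hand side read as $0$ when $p\nmid\boldsymbol{k}$. When $p\nmid\boldsymbol{k}$, the constraint $a_i+b_i+c_i+d_i+2e_i=2p^rn$ forces at least two coordinates in some position $i$ to be prime to $p$, so \eqref{eq:binomial:0:rs} supplies two factors of $p^r$; when $p\mid\boldsymbol{k}$, Jacobsthal's congruence (Lemma~\ref{lem:jacobsthal}) applied to each multinomial factor controls the ratio $B(\boldsymbol{k})/B(\boldsymbol{k}/p)$, with the same linear constraint supplying compensating divisibility when the relevant $p$-adic valuations are small. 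This Coster-style termwise bookkeeping is where the powers of $p$ actually come from, and nothing in your Frobenius-expansion framework substitutes for it. Incidentally, your closing worry that the argument must extract ``exactly one power of $p$ and no more'' is misplaced: the theorem asserts only a lower bound on the valuation, and the paper's proof in fact yields a slightly stronger congruence in the case $p\mid\boldsymbol{k}$.
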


\begin{proof}
  By expressing $A_{\boldsymbol{F}} (n)$ as the constant term of $\Lambda (x,
  y)^n$ where
  \begin{equation*}
    \Lambda (x, y) = \frac{(x - y + 1) (y - x + 1) (x + y - 1) (x + y + 1)
     (x^2 + y^2 + 1)}{(x y)^2},
  \end{equation*}
  Gorodetsky \cite{gorodetsky-ct} obtained the alternative representation
  \begin{equation*}
    A_{\boldsymbol{F}} (n) = \sum_{(\boldsymbol{a}, \boldsymbol{b}, \boldsymbol{c},
     \boldsymbol{d}, \boldsymbol{e}) \in U (n)} (- 1)^{a_1 + b_2 + c_3}
     \binom{n}{\boldsymbol{a}} \binom{n}{\boldsymbol{b}} \binom{n}{\boldsymbol{c}}
     \binom{n}{\boldsymbol{d}} \binom{n}{\boldsymbol{e}}
  \end{equation*}
  as a multiple binomial sum. Here, the sum is over the set
  \begin{equation*}\arraycolsep=0.2em
    U (n) = \left\{ (\boldsymbol{a}, \boldsymbol{b}, \boldsymbol{c}, \boldsymbol{d},
     \boldsymbol{e}) \in \mathbb{Z}_{\geq 0}^{15} \, : \quad
     \begin{array}{rcc}
       a_1 + a_2 + a_3 & = & n\\
       b_1 + b_2 + b_3 & = & n\\
       c_1 + c_2 + c_3 & = & n\\
       d_1 + d_2 + d_3 & = & n\\
       e_1 + e_2 + e_3 & = & n
     \end{array}, \quad \begin{array}{rcc}
       a_i + b_i + c_i + d_i + 2 e_i & = & 2 n\\
       \text{for each $i \in \{ 1, 2, 3 \}$} &  & 
     \end{array} \right\} .
  \end{equation*}
  In the sequel, we use the notation $\boldsymbol{k}= (\boldsymbol{a},
  \boldsymbol{b}, \boldsymbol{c}, \boldsymbol{d}, \boldsymbol{e}) \in
  \mathbb{Z}_{\geq 0}^{15}$. As usual, we say that $p$ divides
  $\boldsymbol{k}$ (and write $p|\boldsymbol{k}$) if $p$ divides each component of
  $\boldsymbol{k}$. If $\boldsymbol{k} \in U (n)$, then we write
  \begin{equation*}
    B (\boldsymbol{k}) \assign (- 1)^{a_1 + b_2 + c_3} \binom{n}{\boldsymbol{a}}
     \binom{n}{\boldsymbol{b}} \binom{n}{\boldsymbol{c}} \binom{n}{\boldsymbol{d}}
     \binom{n}{\boldsymbol{e}} .
  \end{equation*}
  Using this notation, we have
  \begin{equation}
    A_{\boldsymbol{F}} (p^r n) = \sum_{\boldsymbol{k} \in U (p^r n)} B
    (\boldsymbol{k}) = \sum_{\substack{
      \boldsymbol{k} \in U (p^r n)\\
      p|\boldsymbol{k}
    }} B (\boldsymbol{k}) + \sum_{\substack{
      \boldsymbol{k} \in U (p^r n)\\
      p \nmid \boldsymbol{k}
    }} B (\boldsymbol{k}) . \label{eq:AF:B12}
  \end{equation}
  We claim (and will show below) that, for all primes $p \geq 3$ and all
  $\boldsymbol{k} \in U (p^r n)$, we have
  \begin{equation}
    B (\boldsymbol{k}) \equiv B (\boldsymbol{k}/ p) \quad \left({\operatorname{mod} p^{2
    r}}  \right) \label{eq:summand:F:p2}
  \end{equation}
  where the right-hand side is to be interpreted as $0$ if $p \nmid
  \boldsymbol{k}$. Combining \eqref{eq:AF:B12} and \eqref{eq:summand:F:p2}, we
  conclude that
  \begin{equation*}
    A_{\boldsymbol{F}} (p^r n) \equiv \sum_{\substack{
       \boldsymbol{k} \in U (p^r n)\\
       p|\boldsymbol{k}
     }} B (\boldsymbol{k}/ p) = \sum_{\substack{
       \boldsymbol{k} \in U (p^{r - 1} n)
     }} B (\boldsymbol{k}) = A_{\boldsymbol{F}} (p^{r - 1} n) \pmod{p^2},
  \end{equation*}
  which is what we set out to prove. Note that the middle equality uses that
  $\boldsymbol{k} \in U (p^{r - 1} n)$ if and only if $p\boldsymbol{k} \in U (p^r
  n)$.
  
  It therefore only remains to prove \eqref{eq:summand:F:p2}. First, consider
  the case $\boldsymbol{k} \in U (p^r n)$ with $p \nmid \boldsymbol{k}$. Without
  loss of generality, we may assume that $p \nmid a_i$ for some $i \in \{ 1,
  2, 3 \}$ (otherwise, the same argument applies with $a_i$ replaced by one of
  $b_i, c_i, d_i, e_i$). It follows from the constraint $a_i + b_i + c_i + d_i
  + 2 e_i = 2 p^r n$ that at least one of $b_i, c_i, d_i, e_i$ is also not
  divisible by $p$ (note that we excluded the case $p = 2$ so that the factors
  of $2$ in the constraint can be ignored). Without loss, we assume that $p
  \nmid b_i$. Note that we have the divisibility
  \begin{equation}
    \binom{p^r n}{a_i} \binom{p^r n}{b_i} \quad | \quad B (\boldsymbol{k}),
    \label{eq:summand:F:div}
  \end{equation}
  and that, by \eqref{eq:binomial:0:rs}, the two binomial coefficients on the
  left-hand side are each divisible by $p^r$. It follows that $B
  (\boldsymbol{k})$ is divisible by $p^{2 r}$, which proves congruence
  \eqref{eq:summand:F:p2} in the case $p \nmid \boldsymbol{k}$.
  
  Finally, consider \eqref{eq:summand:F:p2} in the case $p|\boldsymbol{k}$.
  Write $s = \min (\nu_p (a_1), r)$ and $t = \min (\nu_p (a_2), r)$ and
  suppose that $s \geq t$ (this is no loss of generality; otherwise, we
  can swap $a_1$ and $a_2$ throughout the proof). Consequently, $a_3 = p^r n -
  a_1 - a_2$ is divisible by $p^t$ as well. It follows from
  Lemma~\ref{lem:jacobsthal} applied to each binomial coefficient on the
  right-hand side of
  \begin{equation*}
    \binom{p^r n}{\boldsymbol{a}} = \binom{p^r n}{a_1} \binom{p^r n - a_1}{a_2}
  \end{equation*}
  (note that $p^s$ divides $p^r n - a_1$), together with $p|\boldsymbol{a}$,
  that
  \begin{equation*}
    \binom{p^r n}{\boldsymbol{a}} / \binom{p^{r - 1} n}{\boldsymbol{a}/ p} \equiv
     1 \pmod{p^{s + 2 t - \varepsilon}},
  \end{equation*}
  where $\varepsilon = 0$ if $p \geq 5$ and $\varepsilon = 1$ if $p = 3$.
  The same argument applies with $\boldsymbol{b}, \boldsymbol{c}, \boldsymbol{d},
  \boldsymbol{e}$ in place of $\boldsymbol{a}$. Suppose that the value of the
  quantity $s + 2 t$ is smallest for $\boldsymbol{a}$ compared to the
  corresponding values for $\boldsymbol{b}, \boldsymbol{c}, \boldsymbol{d},
  \boldsymbol{e}$ (this is no loss of generality; otherwise, we can swap
  $\boldsymbol{a}$ for one of $\boldsymbol{b}, \boldsymbol{c}, \boldsymbol{d},
  \boldsymbol{e}$ in the remaining argument). We therefore have
  \begin{equation}
    \frac{B (\boldsymbol{k})}{B (\boldsymbol{k}/ p)} \equiv 1 \pmod{p^{s + 2 t - \varepsilon}} . \label{eq:binomial:B:1}
  \end{equation}
  If $t = r$, then the claim follows and we are done. Otherwise, $\nu_p (a_2)
  = t < r$. It follows from the constraint $a_i + b_i + c_i + d_i + 2 e_i = 2
  p^r n$ that at least one of $b_2, c_2, d_2, e_2$ is not divisible by $p^{t +
  1}$. Without loss of generality, suppose that $\nu_p (b_2) \leq t$.
  Note that we have the divisibility \eqref{eq:summand:F:div} with $i = 2$ and
  that, by \eqref{eq:binomial:0:rs}, the two binomial coefficients on the
  left-hand side of \eqref{eq:summand:F:div} are each divisible by $p^{r -
  t}$. This shows that $B (\boldsymbol{k})$ is divisible by $p^{2 r - 2 t}$.
  Combining this with \eqref{eq:binomial:B:1}, we conclude that
  \begin{equation*}
    B (\boldsymbol{k}) \equiv B (\boldsymbol{k}/ p) \quad \left({\operatorname{mod} p^{2
     r + s - \varepsilon}}  \right)
  \end{equation*}
  for all $\boldsymbol{k} \in U (p^r n)$ with $p|\boldsymbol{k}$. Since $s
  \geq 1$ and $\varepsilon \in \{ 0, 1 \}$, this is slightly stronger, in
  the case $p|\boldsymbol{k}$, than the claimed congruences
  \eqref{eq:summand:F:p2}.
\end{proof}

The same argument we used to prove Theorem~\ref{thm:supercongruences:p2:F}
applies similarly to the sporadic Ap\'ery-like sequence labeled $(\delta)$
by Almkvist--Zudilin \cite{az-de06}. This sequence has the binomial sum
representation
\begin{equation}
  A_{\delta} (n) = \sum_{k = 0}^n (- 1)^k 3^{n - 3 k} \binom{n}{3 k} \binom{n
  + k}{n} \frac{(3 k) !}{k!^3}, \label{eq:Ad}
\end{equation}
solves the three-term recurrence \eqref{eq:apery3:rec:abcd} with $(a, b, c, d)
= (7, 3, 81, 0)$, and is also known as the Almkvist--Zudilin numbers. Combined
with Theorem~\ref{thm:supercongruences:p2:F}, the following proves
Theorem~\ref{thm:supercongruences:p2}.

\begin{theorem}
  \label{thm:supercongruences:p2:d}For all primes $p \geq 3$ and all
  positive integers $n, r$,
  \begin{equation*}
    A_{\delta} (p^r n) \equiv A_{\delta} (p^{r - 1} n) \pmod{p^{2
     r}} .
  \end{equation*}
\end{theorem}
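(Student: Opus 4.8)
The plan is to follow the proof of Theorem~\ref{thm:supercongruences:p2:F} essentially verbatim, replacing the multiple binomial sum for $A_{\boldsymbol{F}}$ by the analogous representation for $A_{\delta}$ that Gorodetsky \cite{gorodetsky-ct} extracts from a constant term expression $A_{\delta}(n) = \operatorname{ct}[\Lambda_{\delta}(\boldsymbol{x})^n]$. The first step is therefore to record this representation in the form
\[
  A_{\delta}(n) = \sum_{\boldsymbol{k} \in U(n)} B(\boldsymbol{k}),
\]
where $U(n) \subseteq \mathbb{Z}_{\geq 0}^{m}$ is cut out by \emph{row} constraints, expressing that each constituent index tuple sums to $n$, together with one \emph{column} constraint per variable of $\Lambda_{\delta}$, and where the summand $B(\boldsymbol{k})$ is, up to a sign, a product of multinomial coefficients $\binom{n}{\cdot}$. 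As in the case $\boldsymbol{F}$, the structural fact I will rely on is that all these constraints are homogeneous in $(\boldsymbol{k}, n)$, so that $p\boldsymbol{k} \in U(p^r n)$ if and only if $\boldsymbol{k} \in U(p^{r-1} n)$.

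With such a representation in hand, I would split
\[
  A_{\delta}(p^r n) = \sum_{\substack{\boldsymbol{k} \in U(p^r n) \\ p \mid \boldsymbol{k}}} B(\boldsymbol{k}) + \sum_{\substack{\boldsymbol{k} \in U(p^r n) \\ p \nmid \boldsymbol{k}}} B(\boldsymbol{k})
\]
exactly as in \eqref{eq:AF:B12}, and reduce the theorem to the single summand congruence $B(\boldsymbol{k}) \equiv B(\boldsymbol{k}/p) \pmod{p^{2r}}$ for $\boldsymbol{k} \in U(p^r n)$, with the convention that the right-hand side is $0$ when $p \nmid \boldsymbol{k}$. Summing this over $U(p^r n)$ and using the bijection $\boldsymbol{k} \mapsto p\boldsymbol{k}$ then collapses the first sum to $A_{\delta}(p^{r-1} n)$ and annihilates the second. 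The summand congruence itself splits into the same two cases as before: when $p \nmid \boldsymbol{k}$, a column constraint should produce two index entries lying in a common column but in two different multinomial factors, both coprime to $p$, whence \eqref{eq:binomial:0:rs} supplies two independent factors of $p^r$ and $B(\boldsymbol{k})$ is divisible by $p^{2r}$; when $p \mid \boldsymbol{k}$, I would apply Lemma~\ref{lem:jacobsthal} to each multinomial factor, set $s \geq t$ to be the two smallest truncated valuations $\min(\nu_p(\cdot), r)$ among the entries of the governing tuple, derive a ratio congruence of the form \eqref{eq:binomial:B:1}, and, when $t < r$, recover the missing powers of $p$ from a further column-constraint divisibility argument.

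The main obstacle is not the arithmetic, which is identical in form to the case $\boldsymbol{F}$, but the bookkeeping tied to the precise column constraints of Gorodetsky's representation for $A_{\delta}$. The delicate point is the pairing used when $p \nmid \boldsymbol{k}$: I must verify that, whenever one entry of a column is coprime to $p$, the column relation reduced modulo $p$ forces a second coprime entry to sit in a \emph{different} multinomial factor, so that two genuine factors of $p^r$ appear rather than one. This is exactly where the proof of Theorem~\ref{thm:supercongruences:p2:F} discarded $p = 2$, in order to treat the coefficient $2$ in its column constraint as a unit. In view of the powers of $3$ visible in \eqref{eq:Ad}, I expect the prime $3$ to play the corresponding distinguished role here, and the crux of the plan is to confirm that the pairing argument nonetheless survives at $p = 3$, as the statement demands, if necessary by exploiting the slack between the exponent actually produced and the target $p^{2r}$ (just as the condition $s \geq 1$ absorbs the loss $\varepsilon = 1$ of Lemma~\ref{lem:jacobsthal} at $p = 3$ in the previous proof). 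Once these structural verifications are discharged, the remaining estimates are routine and mirror those in the proof of Theorem~\ref{thm:supercongruences:p2:F}.
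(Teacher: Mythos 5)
Your proposal follows essentially the same route as the paper, whose own proof of Theorem~\ref{thm:supercongruences:p2:d} consists precisely of quoting Gorodetsky's multiple binomial sum for $A_{\delta}(n)$ (over $U(n)\subseteq\mathbb{Z}_{\geq 0}^{12}$, with row constraints and the three column constraints $b_1+c_1+d_1=n$, $a_1+b_2+d_2=n$, $a_2+b_3+c_2=n$) and repeating the argument of Theorem~\ref{thm:supercongruences:p2:F} verbatim, including the summand congruence $B(\boldsymbol{k})\equiv B(\boldsymbol{k}/p)\pmod{p^{2r}}$ and the bijection $\boldsymbol{k}\mapsto p\boldsymbol{k}$. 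One small correction to your anticipated obstacle: the powers of $3$ in \eqref{eq:Ad} leave no trace in these column constraints, whose coefficients are all units, so no prime is distinguished in the pairing step (the only wrinkle is that $a_3,c_3,d_3$ lie in no column, which is remedied by first invoking the row constraints), and the restriction $p\geq 3$ stems solely from the Jacobsthal loss $\varepsilon$ of Lemma~\ref{lem:jacobsthal}, which at $p=3$ is absorbed by $s\geq 1$ exactly as you suggest.
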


\begin{proof}
  By expressing $A_{\delta} (n)$ as the constant term of $\Lambda (x, y, z)^n$
  where
  \begin{equation*}
    \Lambda (x, y, z) = \frac{(x + y - 1) (x + z + 1) (y - x + z) (y - z +
     1)}{x y z},
  \end{equation*}
  Gorodetsky \cite{gorodetsky-ct} obtained the alternative representation
  \begin{equation*}
    A_{\delta} (n) = \sum_{(\boldsymbol{a}, \boldsymbol{b}, \boldsymbol{c},
     \boldsymbol{d}) \in U (n)} (- 1)^{a_2 + b_1 + d_3} \binom{n}{\boldsymbol{a}}
     \binom{n}{\boldsymbol{b}} \binom{n}{\boldsymbol{c}} \binom{n}{\boldsymbol{d}}
  \end{equation*}
  as a multiple binomial sum. Here, the sum is over the set
  \begin{equation*}\arraycolsep=0.2em
    U (n) = \left\{ (\boldsymbol{a}, \boldsymbol{b}, \boldsymbol{c}, \boldsymbol{d})
     \in \mathbb{Z}_{\geq 0}^{12} \, : \quad \begin{array}{rcc}
       a_1 + a_2 + a_3 & = & n\\
       b_1 + b_2 + b_3 & = & n\\
       c_1 + c_2 + c_3 & = & n\\
       d_1 + d_2 + d_3 & = & n
     \end{array}, \quad \begin{array}{rcc}
       b_1 + c_1 + d_1 & = & n\\
       a_1 + b_2 + d_2 & = & n\\
       a_2 + b_3 + c_2 & = & n
     \end{array} \right\} .
  \end{equation*}
  As in the proof of Theorem~\ref{thm:supercongruences:p2:F}, we consider
  \begin{equation*}
    B (\boldsymbol{k}) \assign (- 1)^{a_2 + b_1 + d_3} \binom{n}{\boldsymbol{a}}
     \binom{n}{\boldsymbol{b}} \binom{n}{\boldsymbol{c}} \binom{n}{\boldsymbol{d}}
  \end{equation*}
  for $\boldsymbol{k}= (\boldsymbol{a}, \boldsymbol{b}, \boldsymbol{c}, \boldsymbol{d})
  \in U (n)$. The same argument as for the congruences \eqref{eq:summand:F:p2}
  applies and allows us to show that, again, $B (\boldsymbol{k}) \equiv B
  (\boldsymbol{k}/ p)$ modulo $p^{2 r}$, with the understanding that $B
  (\boldsymbol{k}/ p) = 0$ if $p \nmid \boldsymbol{k}$. As a consequence, we once
  more conclude that
  \begin{eqnarray*}
    A_{\delta} (p^r n) & = & \sum_{\substack{
      \boldsymbol{k} \in U (p^r n)\\
      p|\boldsymbol{k}
    }} B (\boldsymbol{k}) + \sum_{\substack{
      \boldsymbol{k} \in U (p^r n)\\
      p \nmid \boldsymbol{k}
    }} B (\boldsymbol{k})\\
    & \equiv & \sum_{\substack{
      \boldsymbol{k} \in U (p^r n)\\
      p|\boldsymbol{k}
    }} B (\boldsymbol{k}/ p) = \sum_{\substack{
      \boldsymbol{k} \in U (p^{r - 1} n)
    }} B (\boldsymbol{k}) = A_{\delta} (p^{r - 1} n) \pmod{p^2},
  \end{eqnarray*}
  as claimed.
\end{proof}

\section{The formal derivative of recurrence sequences}\label{sec:A:prime}

Suppose that $c_0 (n), \ldots, c_r (n) \in \mathbb{Z} [n]$ are polynomials
with $c_0 (0) = 0$ or, equivalently, $c_0 (n) \in n\mathbb{Z} [n]$ and $c_0
(n) \neq 0$ for all $n \in \mathbb{Z}_{> 0}$. Then there exists a unique
sequence $A (n)$ which satisfies the linear recurrence recurrence
\begin{equation}
  \sum_{j = 0}^r c_j (n) A (n - j) = 0 \label{eq:apery:rec:general}
\end{equation}
for all $n \geq 0$, subject to the initial conditions $A (0) = 1$ and $A
(j) = 0$ for $j < 0$. In the sequel, our interest will be limited to the cases
where recursion \eqref{eq:apery:rec:general} is either
\eqref{eq:apery2:rec:abc} or \eqref{eq:apery3:rec:abcd}. In particular, for
our purposes we have $r = 2$. Note that, for \eqref{eq:apery2:rec:abc} or
\eqref{eq:apery3:rec:abcd} only the initial condition $A (0) = 1$ is
significant because in \eqref{eq:apery:rec:general} with $n = 1$ the term
involving $A (- 1)$ vanishes due to $c_2 (1) = 0$.

Suppose further that $c_0 (n) \in n^2 \mathbb{Z} [n]$ (which is satisfied for
the recurrences \eqref{eq:apery2:rec:abc} and \eqref{eq:apery3:rec:abcd}).
Then we can introduce the formal derivative $A' (n)$ of $A (n)$ as the unique
sequence satisfying
\begin{equation}
  \sum_{j = 0}^r c_j (n) A' (n - j) + \sum_{j = 0}^r c_j' (n) A (n - j) = 0
  \label{eq:apery:rec:general:D}
\end{equation}
(where $c_j' (n) = \frac{\md}{\md n} c_j (n)$ is the ordinary derivative
of the polynomial $c_j (n)$), subject to the initial conditions $A' (j) = 0$
for $j \leq 0$.

We note that, in some cases, the formal derivative $A' (n)$ can be obtained as
a usual derivative of a natural interpolation of $A (n)$. Namely, suppose that
the sequence $A (n)$ can be extended to a smooth function $A (n)$ that is
defined for all real $n$, or a suitable subset of the reals, in such a way
that \eqref{eq:apery:rec:general} holds for all such $n$. By differentiating
\eqref{eq:apery:rec:general}, it then follows that the usual derivative
$\frac{\md}{\md n} A (n)$ satisfies the recursion
\eqref{eq:apery:rec:general:D}. Therefore, provided that the initial
conditions line up as well, the usual derivative agrees with the formal
derivative defined above. This is illustrated in Example~\ref{eg:AD:D}. We
then indicate in Example~\ref{eg:A3:D} that the same conclusion can still be
drawn if, instead of \eqref{eq:apery:rec:general}, the extension of $A (n)$ to
real $n$ satisfies an inhomogeneous recurrence with an additional term that
(together with its derivative) vanishes when $n$ is an integer.

\begin{example}
  \label{eg:AD:D}The sporadic sequence $\boldsymbol{D}$ (which is connected to
  $\zeta (2)$ in the same way that the Ap\'ery numbers \eqref{eq:apery3} are
  connected to $\zeta (3)$) is given by
  \begin{equation}
    A_{\boldsymbol{D}} (n) = \sum_{k = 0}^n \binom{n}{k}^2 \binom{n + k}{k}
    \label{eq:AD}
  \end{equation}
  and solves \eqref{eq:apery2:rec:abc} with $(a, b, c) = (11, 3, - 1)$. The
  sequence $A_{\boldsymbol{D}}' (n)$ is therefore characterized by solving the
  recurrence
  \begin{eqnarray}
    (n + 1)^2 A_{\boldsymbol{D}}' (n + 1) & = & (11 n^2 + 11 n + 3)
    A_{\boldsymbol{D}}' (n) + n^2 A_{\boldsymbol{D}}' (n - 1) 
    \label{eq:AD:D:rec}\\
    &  & - 2 (n + 1) A_{\boldsymbol{D}} (n + 1) + 11 (2 n + 1) A_{\boldsymbol{D}}
    (n) + 2 n A_{\boldsymbol{D}} (n - 1) \nonumber
  \end{eqnarray}
  with $A_{\boldsymbol{D}}' (0) = 0$ (note that we don't actually need an
  additional initial value for $A_{\boldsymbol{D}}' (- 1)$). The resulting
  initial values for $A_{\boldsymbol{D}}' (n)$ for $n = 1, 2, \ldots$ are
  \begin{equation*}
    5, \quad \frac{75}{2}, \quad \frac{1855}{6}, \quad
     \frac{10875}{4}, \quad \frac{299387}{12}, \quad
     \frac{943397}{4}, \quad \frac{63801107}{28}, \quad
     \frac{1253432797}{56}, \quad \ldots
  \end{equation*}
  Indeed, one can show (for instance, using creative telescoping) that the
  formal derivative has the explicit formula
  \begin{equation}
    A_{\boldsymbol{D}}' (n) = 5 \sum_{k = 0}^n \binom{n}{k}^2 \binom{n + k}{k}
    (H_n - H_k) \label{eq:AD:D}
  \end{equation}
  involving harmonic sums. On the other hand, the series
  \begin{equation*}
    A_{\boldsymbol{D}} (x) = \sum_{k = 0}^{\infty} \binom{x}{k}^2 \binom{x +
     k}{k}
  \end{equation*}
  converges for complex $x$ with $\Re x > - 1$ and therefore defines an
  interpolation of the sporadic sequence $A_{\boldsymbol{D}} (n)$. As shown in
  \cite{os-zagierapery}, this interpolation satisfies the homogeneous
  functional equation
  \begin{equation*}
    (x + 1)^2 A_{\boldsymbol{D}} (x + 1) - (11 x^2 + 11 x + 3) A_{\boldsymbol{D}}
     (x) - x^2 A_{\boldsymbol{D}} (x - 1) = 0
  \end{equation*}
  for all complex $x$ with $\Re x > - 1$ (this is
  \eqref{eq:apery2:rec:abc} with $(a, b, c) = (11, 3, - 1)$). We can then
  differentiate this equation to obtain \eqref{eq:AD:D:rec}. By verifying that
  the derivative of $A_{\boldsymbol{D}} (x)$ vanishes for $x = 0$, we conclude
  that, for positive integers $n$, the values $A_{\boldsymbol{D}}' (n)$ in
  \eqref{eq:AD:D} agree with the values of the actual derivative of the
  interpolation $A_{\boldsymbol{D}} (x)$.
\end{example}

\begin{example}
  \label{eg:A3:D}In a similar way, Zagier \cite[Section~7]{zagier-de}
  interpolates the Ap\'ery numbers $A (n)$, defined by the binomial sum
  \eqref{eq:apery3}, using the series
  \begin{equation*}
    A (x) = \sum_{k = 0}^{\infty} \binom{x}{k}^2 \binom{x + k}{k}^2,
  \end{equation*}
  which is well-defined for all complex $x$. Somewhat surprisingly, Zagier
  showed that, unlike the previous example, the interpolation $A (x)$
  satisfies the functional equation (see \cite{os-zagierapery} for an
  algorithmic derivation using creative telescoping)
  \begin{equation*}
    (x + 1)^3 A (x + 1) - (2 x + 1) (17 x^2 + 17 x + 5) A (x) + x^3 A (x - 1)
     = \frac{8}{\pi^2} (2 x + 1) \sin^2 (\pi x)
  \end{equation*}
  which is an inhomogeneous version of the recurrence
  \eqref{eq:apery3:rec:abcd} with $(a, b, c, d) = (17, 5, 1, 0)$ satisfied the
  Ap\'ery numbers. However, note that the inhomogeneous term and its
  derivative vanish for any integer $x$. We can therefore conclude as in the
  previous example that, for positive integers $n$, the values $A' (n)$ of the
  formal derivative agree with the derivative values of Zagier's
  interpolation. As recorded by Gessel \cite{gessel-super}, these are given
  by
  \begin{equation*}
    A' (n) = 2 \sum_{k = 0}^n \binom{n}{k}^2 \binom{n + k}{k}^2 (H_{n + k} -
     H_{n - k}) .
  \end{equation*}
\end{example}

\begin{remark}
  Write $F (x) = \sum_{n \geq 0} A (n) x^n$. Then the recurrence
  \eqref{eq:apery:rec:general} translates into the differential equation
  \begin{equation}
    \sum_{j = 0}^r c_j (\theta) x^j F (x) = 0, \label{eq:apery:diff:general}
  \end{equation}
  where $\theta = x \frac{\md}{\md x}$ is the Euler operator. Note that
  $c_0 (n)$ is the indicial polynomial of this differential equation. Since we
  assumed that $c_0 (n) \in n^2 \mathbb{Z} [n]$ and $c_0 (n) \neq 0$ for all
  $n \in \mathbb{Z}_{> 0}$, it follows that, up to scaling, $F (x)$ is the
  unique power series solution of \eqref{eq:apery:diff:general}. Moreover, if
  we set $G (x) = \sum_{n \geq 0} A' (n) x^n$, then we claim that $\log
  (x) F (x) + G (x)$ is a second solution of \eqref{eq:apery:diff:general}. To
  see this, note that it follows inductively from $\theta \log (x) F (x) =
  \log (x) \theta F (x) + F (x)$ that $\theta^k \log (x) F (x) = \log (x)
  \theta^k F (x) + k \theta^{k - 1} F (x)$. Consequently, we have
  \begin{equation*}
    c (\theta) \log (x) F (x) = \log (x) c (\theta) F (x) + c' (\theta) F (x)
  \end{equation*}
  for any polynomial $c (n) \in \mathbb{C} [n]$. On the other hand, we readily
  verify that $c (\theta) x^j = x^j c (\theta + j)$. Applying these formulae,
  as well as using \eqref{eq:apery:diff:general}, we find
  \begin{eqnarray*}
    \sum_{j = 0}^r c_j (\theta) x^j \log (x) F (x) & = & \sum_{j = 0}^r x^j
    c_j (\theta + j) \log (x) F (x)\\
    & = & \sum_{j = 0}^r x^j (\log (x) c_j (\theta + j) F (x) + c_j' (\theta
    + j) F (x))\\
    & = & \sum_{j = 0}^r x^j c_j' (\theta + j) F (x)\\
    & = & \sum_{j = 0}^r c_j' (\theta) x^j F (x) .
  \end{eqnarray*}
  Hence,
  \begin{equation*}
    \sum_{j = 0}^r c_j (\theta) x^j (\log (x) F (x) + G (x)) = \sum_{j = 0}^r
     [c_j (\theta) x^j G (x) + c_j' (\theta) x^j F (x)] = 0,
  \end{equation*}
  where the final equality follows from the equivalent recursion
  \eqref{eq:apery:rec:general:D} for the coefficients.
\end{remark}

\section{Gessel--Lucas congruences modulo \texorpdfstring{$p^2$}{p2}}\label{sec:proof}

We are now in a convenient position to prove Theorem~\ref{thm:lucas:p2:i},
which is restated below for convenience. This an extension of Gessel's result
\cite[Theorem~4]{gessel-super} for the Ap\'ery numbers and our proof
proceeds along the same lines, with an extra argument required for the three
sporadic Ap\'ery-like sequences that satisfy \eqref{eq:apery3:rec:abcd} with
$d \neq 0$. Here, the formal derivative $A' (n)$ is as introduced in
Section~\ref{sec:A:prime}.

\begin{theorem}
  Let $A (n)$ be one of the $6 + 6 + 3$ known sporadic Ap\'ery-like
  sequences. Then, for all primes $p \geq 3$ and all integers $n, k$ with
  $0 \leq k < n$,
  \begin{equation}
    A (p n + k) \equiv A (k) A (n) + p n A' (k) A (n) \pmod{p^2} .
    \label{eq:apery:lucas:p2}
  \end{equation}
\end{theorem}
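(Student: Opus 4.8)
The plan is to prove the Gessel--Lucas congruence by establishing a summand-level congruence for a binomial-sum representation of $A(pn+k)$, mirroring the strategy used for the supercongruences in Theorems~\ref{thm:supercongruences:p2:F} and~\ref{thm:supercongruences:p2:d}, and then extending Gessel's original argument for the Ap\'ery numbers. Each sporadic sequence admits a representation $A(n) = \sum_{\boldsymbol{k}} B_n(\boldsymbol{k})$ as a (possibly signed) multiple binomial sum over a lattice of index tuples constrained linearly by $n$. The first step is to split the sum for $A(pn+k)$ according to whether or not $p$ divides the index tuple $\boldsymbol{k}$. For the terms with $p \nmid \boldsymbol{k}$, I expect the same divisibility argument as in Theorem~\ref{thm:supercongruences:p2:F} to show each such summand is divisible by $p^2$ (two distinct binomial coefficients each contribute a factor of $p$ via~\eqref{eq:binomial:0:rs}), so these contribute nothing modulo $p^2$ beyond what is already captured.

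\medskip

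For the terms with $p \mid \boldsymbol{k}$, the key is a refined expansion of each binomial coefficient $\binom{pn+k}{\boldsymbol{a}}$ in terms of $\binom{n}{\boldsymbol{a}/p}$ and the ``lower-order'' tuple determined by $k$. Here I would write the top as $pn+k$ and use Jacobsthal's congruence (Lemma~\ref{lem:jacobsthal}) together with a careful first-order Taylor-type expansion: modulo $p^2$, the multinomial $\binom{pn+k}{\boldsymbol{k}}$ factors, up to a correction linear in $pn$, as a product of a multinomial in the high part (governed by $n$) and a multinomial in the low part (governed by $k$). The derivative $A'(k)$ should emerge precisely as the coefficient of the linear-in-$pn$ term, since differentiating a binomial sum with respect to its upper parameter produces exactly the harmonic-sum weights that appear in the explicit formulas for $A'$ (as illustrated in Examples~\ref{eg:AD:D} and~\ref{eg:A3:D}). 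Summing the resulting summand congruences over the constrained lattice, and using that $\boldsymbol{k} \in U(pn+k)$ with $p\mid\boldsymbol{k}$ corresponds to tuples in the representations of $A(n)$ and $A'(k)$, should assemble the right-hand side $A(k)A(n) + pnA'(k)A(n)$.

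\medskip

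The main obstacle, and where the supercongruence Theorem~\ref{thm:supercongruences:p2} is genuinely needed, is controlling the interaction between the two halves of the index split and the ``carry'' structure when adding the $p$-adic digits of $pn+k$. Gessel's argument for~\eqref{eq:apery3} exploits that $k$ occupies only the lowest digit, so that $pn + k$ has a clean digit expansion, but one must verify that no cross terms between $n$ and $k$ survive modulo $p^2$ except the single linear term $pn\,A'(k)$. I anticipate invoking Theorem~\ref{thm:supercongruences:p2} to absorb the contributions where the high part itself is further divisible by $p$, thereby reducing the problem to the first-digit interaction. The supercongruence guarantees $A(p^r m) \equiv A(p^{r-1}m) \pmod{p^{2r}}$, which is exactly the strength required to discard higher-order carry effects and keep only the first-order term.

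\medskip

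Finally, for the three Cooper sequences $s_7, s_{10}, s_{18}$ satisfying~\eqref{eq:apery3:rec:abcd} with $d \neq 0$, an extra argument is needed, as flagged in the surrounding text. For these I would use Lemma~\ref{lem:A:pm1}, namely $A(p-1) \equiv 0 \pmod{p}$, to handle the boundary behavior of the recurrence that does not arise for the other twelve sequences; this divisibility should ensure that the potentially problematic summands at the edge of the digit range vanish modulo the required power of $p$. Throughout, I would work directly from the defining recurrence~\eqref{eq:apery:rec:general} and the formal derivative recurrence~\eqref{eq:apery:rec:general:D}, rather than case-by-case with each explicit binomial sum, so that the argument applies uniformly; the recurrence-based formulation of $A'(n)$ in Section~\ref{sec:A:prime} is what makes such a uniform treatment possible.
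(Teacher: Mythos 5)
Your primary plan---a summand-level analysis of multinomial representations in the style of the proofs of Theorems~\ref{thm:supercongruences:p2:F} and \ref{thm:supercongruences:p2:d}---breaks down at its first step, and it is not the paper's route. In those representations the rows of each index tuple sum to the top argument of the multinomials; if the top argument is $pn+k$ with $0 < k < p$, then \emph{no} tuple is divisible by $p$, so your split into $p \mid \boldsymbol{k}$ and $p \nmid \boldsymbol{k}$ degenerates (the first part is empty), and \eqref{eq:binomial:0:rs}, which requires the top entry to be divisible by $p$, yields no divisibility at all: the entire mechanism that powered the supercongruence proofs evaporates precisely in the cases $k \neq 0$ that Theorem~\ref{thm:lucas:p2:i} is about. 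Beyond this, the plan cannot be uniform even in principle, since (as the paper's concluding section notes) no suitable constant-term representation is currently known for the sporadic sequence $(\eta)$; and your key claim that the linear-in-$pn$ correction assembles into exactly $p n A'(k) A(n)$ is asserted rather than proved---$A'$ is \emph{defined} only through the recurrence \eqref{eq:apery:rec:general:D}, and explicit harmonic-sum formulas for it are known just in isolated cases (Examples~\ref{eg:AD:D} and \ref{eg:A3:D}), so producing them per sequence would be a separate case-by-case project. Finally, the role you assign to Theorem~\ref{thm:supercongruences:p2} is misplaced: as you yourself observe, $k$ occupies only the lowest $p$-adic digit, so there are no higher carries to absorb; in the actual proof the supercongruence enters only through its case $r = 1$, namely $A(pn) \equiv A(n) \pmod{p^2}$, which settles the base case $k = 0$ (consistent with $A'(0) = 0$).

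The paper's proof is the recurrence-based route you mention only in your closing sentence, and the decisive step you never articulate is a uniqueness-and-comparison argument modulo $p$. By Theorem~\ref{thm:lucas} one writes $A(pn+k) \equiv A(k)A(n) + p\, a(n,k) \pmod{p^2}$; substituting $pn+k$ into \eqref{eq:apery:rec:general}, Taylor-expanding $c_j(pn+k) \equiv c_j(k) + p n\, c_j'(k) \pmod{p^2}$, applying the Lucas congruences, and dividing by $p$ gives
\begin{equation*}
  \sum_{j = 0}^{r} \bigl( c_j(k)\, a(n, k - j) + n\, c_j'(k)\, A(k - j) A(n) \bigr) \equiv 0 \pmod{p} .
\end{equation*}
Since $c_0(k) = k^{\lambda}$ is invertible modulo $p$ for $1 \leq k \leq p - 1$, this recursion together with $a(n,0) \equiv 0$ determines $a(n,k)$ modulo $p$ uniquely; replacing $n$ by $k$ in \eqref{eq:apery:rec:general:D} and multiplying by $n A(n)$ shows that $n A'(k) A(n)$ satisfies the identical relation, whence $a(n,k) \equiv n A'(k) A(n) \pmod{p}$. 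Your placement of Lemma~\ref{lem:A:pm1} is correct in spirit: the single problematic boundary term is $c_2(pn+1) A(pn-1)$ (the case $k = 1$, $j = 2$), where for Cooper's three sequences $c_2(pn+1)$ is divisible by $p$ but not by $p^2$, and Lemma~\ref{lem:A:pm1} combined with the Lucas congruences gives $A(pn-1) \equiv 0 \pmod{p}$, restoring divisibility by $p^2$. As submitted, however, your proposal neither executes the binomial-sum plan (which fails as explained above) nor supplies this comparison argument, so it has a genuine gap.
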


\begin{proof}
  By Theorem~\ref{thm:lucas}, each sporadic Ap\'ery-like sequence $A (n)$
  satisfies the Lucas congruences. That is, $A (p n + k) \equiv A (k) A (n)$
  modulo $p$. We therefore have
  \begin{equation*}
    A (p n + k) \equiv A (k) A (n) + p a (n, k) \pmod{p^2}
  \end{equation*}
  for some $a (n, k) \in \mathbb{Z}_p$. Our goal is to show that $a (n, k)
  \equiv n A' (k) A (n) \pmod{p}$.
  
  By the case $r = 1$ of the supercongruences in
  Theorem~\ref{thm:supercongruences:p2}, we know that each of the sequences $A
  (n)$ in question satisfies the congruences
  \begin{equation*}
    A (p n) \equiv A (n) \pmod{p^2} .
  \end{equation*}
  This verifies the case $k = 0$ in \eqref{eq:apery:lucas:p2} because, by our
  definition, $A' (0) = 0$. In the sequel, we therefore assume that $k
  \geq 1$.
  
  Substituting $n$ by $p n + k$ in the recurrence \eqref{eq:apery:rec:general}
  for $A (n)$, which is either of the form \eqref{eq:apery2:rec:abc} or of the
  form \eqref{eq:apery3:rec:abcd}, we have
  \begin{equation*}
    \sum_{j = 0}^r c_j (p n + k) A (p n + k - j) = 0
  \end{equation*}
  for all $n, k \in \mathbb{Z}_{\geq 0}$. If we normalize $c_0 (n) =
  n^{\lambda}$ with $\lambda \in \{ 2, 3 \}$, then $c_2 (n) = c (n - 1)^2$ or
  $c_2 (n) = (n - 1) (c (n - 1)^2 + d)$ depending on whether
  \eqref{eq:apery:rec:general} is of the form \eqref{eq:apery2:rec:abc} or of
  the form \eqref{eq:apery3:rec:abcd}.
  
  We claim that the terms with $k - j < 0$ do not contribute modulo $p^2$.
  Note that $k - j < 0$ only if $k = 1, j = 2$, so that we need to show that
  $c_j (p n + k) A (p n + k - j) = c_2 (p n + 1) A (p n - 1)$ is divisible by
  $p^2$. We note that $c_2 (p n + 1)$ is divisible by $p^2$ unless we have
  $c_2 (n) = (n - 1) (c (n - 1)^2 + d)$ with $d \neq 0$. In that latter case,
  which consists of the $3$ known sporadic Ap\'ery-like sequences that
  satisfy \eqref{eq:apery3:rec:abcd} with $d \neq 0$, $c_2 (p n + 1)$ is only
  divisible by $p$. However, in those cases, we can combine
  Lemma~\ref{lem:A:pm1}, by which $A (p - 1)$ is divisible by $p$, with the
  Lucas congruences \eqref{eq:lucas:p:i} to conclude that
  \begin{equation*}
    A (p n - 1) = A (p (n - 1) + p - 1) \equiv A (n - 1) A (p - 1) \equiv 0
     \pmod{p} .
  \end{equation*}
  This shows that, in all cases, $c_2 (p n + 1) A (p n - 1)$ is divisible by
  $p^2$.
  
  Since the terms with $k - j < 0$ do not contribute modulo $p^2$ (and because
  $A (j) = 0$ for $j < 0$), we can apply the Lucas congruences to obtain
  \begin{equation*}
    \sum_{j = 0}^r c_j (p n + k) (A (k - j) A (n) + p a (n, k - j)) \equiv 0
     \pmod{p^2},
  \end{equation*}
  with the understanding that $a (n, j) = 0$ if $j < 0$. Using the Taylor
  expansion of the polynomials $c_j (n)$, this becomes
  \begin{equation*}
    \sum_{j = 0}^r (c_j (k) + p n c_j' (k)) (A (k - j) A (n) + p a (n, k -
     j)) \equiv 0 \pmod{p^2} .
  \end{equation*}
  Expanding, followed by applying the recurrence \eqref{eq:apery:rec:general},
  this is equivalent to
  \begin{equation}
    \sum_{j = 0}^r (c_j (k) a (n, k - j) + n c_j' (k) A (k - j) A (n)) \equiv
    0 \pmod{p} . \label{eq:apery:lucas:p2:rec:p}
  \end{equation}
  Since $c_0 (n) = n^{\lambda}$ with $\lambda \in \{ 2, 3 \}$, we have $c_0
  (k) \nequiv 0 \pmod{p}$ for all $k \in \{ 1, 2, \ldots, p
  - 1 \}$. Therefore, the congruence \eqref{eq:apery:lucas:p2:rec:p} together
  with $a (n, 0) = 0$ characterizes the values modulo $p$ of $a (n, k)$ for $k
  \in \{ 1, 2, \ldots, p - 1 \}$.
  
  On the other hand, replacing $n$ by $k$ in \eqref{eq:apery:rec:general:D},
  and multiplying with $n A (n)$, we find
  \begin{equation*}
    \sum_{j = 0}^r (c_j (k) n A' (k - j) A (n) + n c_j' (k) A (k - j) A (n))
     = 0.
  \end{equation*}
  Comparison with \eqref{eq:apery:lucas:p2:rec:p} shows that $a (n, k) \equiv
  n A' (k) A (n) \pmod{p}$, which is what we set out to
  show.
\end{proof}

\section{Conclusions}

Samol and van Straten \cite{sd-laurent09} (see also \cite{mv-laurent13})
showed that, if the Newton polytope of a Laurent polynomial $P (\boldsymbol{x})
\in \mathbb{Z} [\boldsymbol{x}^{\pm 1}]$, with $\boldsymbol{x}= (x_1, \ldots,
x_d)$, has the origin as its only interior integral point, then $A (n) =
\operatorname{ct} [P (\boldsymbol{x})^n]$, the sequence formed by the constant terms of
powers of $P (\boldsymbol{x})$, satisfies the \emph{Dwork congruences}
\begin{equation}
  A (p^r m + n) A (\lfloor n / p \rfloor) \equiv A (p^{r - 1} m + \lfloor n /
  p \rfloor) A (n) \pmod{p^r} \label{eq:dwork}
\end{equation}
for all primes $p$ and all integers $m, n \geq 0$, $r \geq 1$. The
case $r = 1$ of these congruences is equivalent to the Lucas congruences
\eqref{eq:lucas:i}. In a similar spirit, is there a natural extension of the
Gessel--Lucas congruences that we prove in Theorem~\ref{thm:lucas:p2:i} to
modulus $p^{2 r}$? Presumably, such an extension should also contain the
supercongruences of Theorem~\ref{thm:supercongruences:p2} as a special case.

In another direction, it is natural to investigate an extension of
Theorem~\ref{thm:lucas:p2:i} modulo $p^3$. More generally, it would be of
interest to understand the minimal number of states required for a linear
$p$-scheme describing sporadic Ap\'ery-like sequences modulo $p^r$. We refer
to recent work of Beukers \cite{beukers-p-schemes} for recent promising
progress in this regard.

Recently, through a careful and clever search, Gorodetsky was able to find new
constant term representations \cite{gorodetsky-ct} for several of the
sporadic Ap\'ery-like sequences. Combined with the result of Samol and van
Straten, these can be used to establish Theorem~\ref{thm:lucas} for $14$ of
the $15$ sporadic sequences. In the case of the sporadic sequence $(\eta)$,
however, we presently only have the lengthy and technical proof given in
\cite{ms-lucascongruences}. It would of interest to also find a suitable
constant term representation for the sporadic sequence $(\eta)$. More
generally, it would be valuable to have general results in the spirit of
\cite{sd-laurent09} that would allow us to prove
Theorem~\ref{thm:lucas:p2:i} in the presence of suitable constant term
expressions. However, while the result of Samol and van Straten
\cite{sd-laurent09} shows that large families of sequences satisfy the Lucas
congruences, the congruences in Theorem~\ref{thm:lucas:p2:i} are considerably
more rare. In particular, we note that the congruences \eqref{eq:lucas:p2:i}
imply the congruences $A (p n) \equiv A (n) \pmod{p^2}$,
which are an instance of the supercongruences discussed in
Section~\ref{sec:supercongruences}.

\begin{acknowledgements}
The author is grateful to Frits Beukers for
interesting discussions, motivation, and for sharing early versions of the
paper \cite{beukers-p-schemes}.
\end{acknowledgements}

\end{document}